\theoremstyle{plain}
\newtheorem{theorem}{Theorem}[section]
\newtheorem{lemma}[theorem]{Lemma}
\newtheorem{corollary}[theorem]{Corollary}
\newtheorem{definition}[theorem]{Definition}
\newtheorem{proposition}[theorem]{Proposition}
\theoremstyle{definition}
\newtheorem{question}{Question}
\newcommand{\ML}{\mathcal{ML}(S)}
\newcommand{\Co}{\mathcal{C}_0(S)}
\newcommand{\T}{\mathcal{T}}
\newcommand{\E}{\textup{Ext}}
\newcommand{\Sim}{\mathcal{S}(S)}
\newcommand{\Cim}{\mathcal{C}(S)}
\newcommand{\Arc}{\mathcal{A}(S)}
\newcommand{\Bnd}{\mathcal{B}(S)}
\title{Almost isometries between Teichm\"uller spaces }
\date{\today}
\author{ Manman Jiang\and Lixin Liu \and  Huiping Pan}
\address{ Manman Jiang\\
Guangzhou Maritime University, 510275, Guangzhou, P. R. China}
\email{jiangmanm@126.com}
\address{Lixin Liu\\
School of Mathematics and Computational Science, Sun Yat-Sen University, 510275, Guangzhou, P. R. China}
\email{mscllx@mail.sysu.edu.cn}
\address{Huiping Pan\\
 School of Mathematical Science, Fudan University, 200433, Shanghai, P. R. China}
\email{panhp@fudan.edu.cn}
\thanks{This work is partially supported by NSFC, No: 11271378.}
\begin{document}

\begin{abstract}
We prove that the Teichm\"uller space of surfaces with given boundary lengths equipped with the arc metric (resp. the Teichm\"uller metric) is almost isometric to the Teichm\"uller space of punctured surfaces equipped with the Thurston metric (resp. the Teichm\"uller metric).
\end{abstract}
\maketitle
%\tableofcontents

\noindent Keywords: {Teichm\"uller space, almost isometry, Thurston metric, Teichm\"uller metric, arc metric.}\\
\noindent AMS MSC2010: {32G15,  30F60,  51F99.   }

\vskip 20pt
\section{introduction}
Let $S$ be an oriented surface of genus $g$ with $n$ boundary components such that $n\geq 1$.  The Euler characteristic of $S$ is $\chi(S)=2-2g-n$.  Throughout this paper we assume that $\chi(S)\leq 0$.
 Recall that a \textit{marked complex structure} on $S$ is a pair $(X,f)$ where $X$ is a Riemann surface and $f:S\to X$ is an orientation preserving  homeomorphism. Two marked complex structures $(X,f)$ and $(Y,h)$ are called \textit{equivalent} if there is a conformal map homotopic to $f\circ h^{-1}$.  Denote by $[X,f]$ the equivalence class of $(X,f)$. The set of equivalence classes of marked complex structures is the Teichm\"uller space  denoted by $T_{g,n}$.

  %By the Uniformization Theorem, every Riemann surface $X$ with nonpositive Euler characteristic admits a unique complete hyperbolic metric denoted by $m_X$. If $X$ is a surface of finite type which is obtained from deleting finite points from a closed surface, $m_X$ has finite area.  Otherwise $m_X$  has infinite area.
  %To obtain a hyperbolic metric with finite area on a Riemann surface $X$ with boundary, we consider the (Sckottky) double $X^d$ of $X$ (see \S \ref{ssec:double}). The restriction to $X$ of  the hyperbolic metric $m_{X^d}$  on $X^d$ obtained from the Uniformiation Theorem is a hyperbolic metric with finite area such that each boundary component of $X$ is a closed geodesic. This metric is called the \textit{intrinsic} hyperbolic  metric on $X$.

 Let $X$ be a Riemann  surface with boundary.  There exist two different hyperbolic metrics on $X$. One is of infinite area obtained from the Uniformization theorem, the other one is of finite area obtained from the restriction to $X$ of the hyperbolic metric on its (Sckottky) double such that  each boundary component is a smooth simple closed geodesic (see \S \ref{ssec:double}). The second one is called the intrinsic metric on  $X$. In this paper when we mention a hyperbolic metric on a surface with nonempty boundary we mean  the second one.
 The correspondence between complex structure and hyperbolic metric provides another approach for the Teichm\"uller theory.
 Recall that a \textit{ marked hyperbolic surface} $(X, f)$ is a hyperbolic surface $X$  equipped with an orientation-preserving homeomorphism $f : S\rightarrow X$, where $f$ maps each component of the boundary of  $S$ to a geodesic boundary of $X$. Two marked hyperbolic surfaces  $(X,f)$ and $(Y,h)$ are called \textit{equivalent} if there is an isometry  homotopic to $f\circ{h}^{-1}$ relative to the boundary.
The  Teichm\"{u}ller space  $T_{g,n}$  is also the set of equivalence classes of marked hyperbolic surface. For simplicity, we will denote a point $[X,f]$ in ${T}_{g,n}$ by $X$, without explicit reference to the marking or to the equivalence relation.

 Let $\beta_1,\cdots,\beta_n$ be the boundary components of $S$.  For any $\Lambda=(\lambda_1,...,\lambda_n)$ $ \in R_{\geq0}^n$.   Let  $T_{g,n}(\Lambda)\subset T_{g,n}$ be the set of the equivalence classes of marked hyperbolic metrics  whose boundary components have hyperbolic lengths $(l(\beta_1),...,l(\beta_n))=\Lambda$. In particular, $T_{g,n}(0)$ is the Teichm\"uller space of surfaces with $n$ punctures.  % For convenience, we allow the boundary component to have zero length.  And when this happens, it means the corresponding boundary component is a puncture.
 It is clear that $T_{g,n}=\cup_{\Lambda\in \mathbb R^n_+}T_{g,n}(\Lambda)$.  Let $\Gamma=\{\gamma_1,\cdots,\gamma_{3g-3+n}\}$ be a pants decomposition of $S$, i.e. the complement of $\Gamma$  on $S$ consists of $2g-2+n$ pairs of pants $\{R_i\}_{i=1}^{2g-2+n}$.  Let $\mu$ be a set of disjoint simple closed curves whose restriction to any pair of pants $R_i$ consists  of three arcs, such that any two of the arcs are not free homotopic with respect to the boundary of $R_i$. The pair $(\Gamma,\mu)$ is called a \textit{marking} of $S$. For any $X\in T_{g,n}$, let $(\mathcal L,\mathcal T,\Lambda)$ be the corresponding Fenchel-Nielsen coordinates with respect to the marking $(\Gamma,\mu)$, where $\mathcal L=(l_1,\cdots,l_{3g-3+n})$ represents the lengths of $\{\gamma_1,\cdots,\gamma_{3g-3+n}\}$, $\mathcal T=(t_1,\cdots,t_{3g-3+n})$ represents the twists along $\{\gamma_1,\cdots,\gamma_{3g-3+n}\}$  and $\Lambda=(\lambda_1,...,\lambda_n)$ represents the lengths of the boundary components (for details about Fenchel-Nielsen coordinates we refer to \cite{Bu}). The Fenchel-Nielsen coordinates induce a  natural homeomorphism between Teichm\"uller spaces $T_{g,n}(\Lambda)$ and  $T_{g,n}(0)$ in the following way:
 \begin{eqnarray*}
   \Phi_\Gamma: T_{g,n}(\Lambda)&\longrightarrow& T_{g,n}(0)\\
   (\mathcal L,\mathcal T,\Lambda)&\longmapsto&(\mathcal L,\mathcal T,0).
 \end{eqnarray*}

The goal of this paper is to compare various metrics on the Teichm\"uller spaces $T_{g,n}(\Lambda)$ and  $T_{g,n}(0)$ via the homeomorphism $\Phi_{\Gamma}$.
\begin{definition}\label{def}
  Two metric spaces $(X_1,d_1)$ and $(X_2,d_2)$ are called \textit{almost isometric} if there exist a map $f:X_1\to X_2$, two positive constants $A$ and $B$, such that both of  the following two conditions hold.
\begin{enumerate}
  \item For any $x,y\in X_1$,
      \[ |d_2(f(x),f(y))-d_1(x,y)|\leq B.\]
  \item For any $z\in X_2$, there exists $x\in X_1$ such that
     \[ d_2(z,f(x))\leq A. \]
\end{enumerate}
\end{definition}
\subsection{The Thuston metric and the arc metric}

 An \textit{essential simple closed curve} on $S$ is a simple closed curve which is not homotopic to a single point or a boundary component. An \textit{essential arc}  is a simple  arc whose endpoints  are on the boundary and which is  not homotopic to any subarc of the boundary. Let ${\Sim} $ be the set of homotopy classes of essential simple closed curves on S,  ${\Arc}$ be the set of homotopy classes of essential arcs on S, and $\Bnd$ be the set of  homotopy classes of the boundary components.

For any $X_1,X_2\in T_{g,n}(\Lambda)$, define
$$d_{Th}(X_1,X_2):=\log \sup_{[\alpha]\in\Sim}\frac{l_{X_2}([\alpha])}{l_{X_1}([\alpha])}$$
and
$$d_{A}(X_1,X_2):=\log\sup_{[\alpha]\in \Arc}\frac{l_{X_2}([\alpha])}{l_{X_1}([\alpha])}.$$

From the works \cite{Pan} and \cite{LPST2}, both $d_{Th}$ and $d_A$ are asymmetric metric on $T_{g,n}(\Lambda)$, which are called the \textit{Thurston metric} and the \textit{arc metric} respectively.  Moreover, the authors (\cite{LPST2})  observed that
$$d_{A}(X_1,X_2)=\log\sup_{[\alpha]\in \Arc\cup\Bnd\cup\Sim}\frac{l_{X_2}([\alpha])}{l_{X_1}([\alpha])}.$$

Our first result is the following.
\begin{theorem}\label{thm:arc-almost}
  $(T_{g,n}(\Lambda),d_A)$ and  $(T_{g,n}(0),d_{Th})$ are almost isometric. More precisely, there is a constant $C_1$ depending on the surface $S$ and boundary lengths $\Lambda$ such that,
  \[ |d_A(X_1,X_2)-d_{Th}(\Phi_\Gamma(X_1),\Phi_\Gamma(X_2))|\leq C_1.\]
\end{theorem}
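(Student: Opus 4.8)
The plan is to prove the two one-sided estimates
$$d_{Th}(\Phi_\Gamma(X_1),\Phi_\Gamma(X_2))\le d_A(X_1,X_2)+C\quad\text{and}\quad d_A(X_1,X_2)\le d_{Th}(\Phi_\Gamma(X_1),\Phi_\Gamma(X_2))+C$$
separately. Write $Y_i=\Phi_\Gamma(X_i)$. Since $\Phi_\Gamma$ fixes all Fenchel--Nielsen coordinates except the boundary lengths, $X_i$ and $Y_i$ carry literally the same length--twist data on the interior curves and differ only near the boundary: each boundary geodesic of length $\lambda_k$ on $X_i$ is replaced by a cusp on $Y_i$, and the standard collar of width $\asymp\log(1/\lambda_k)$ by a cusp neighbourhood. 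The first step is to extract from this collar/cusp geometry a uniform length comparison: a constant $C_0=C_0(S,\Lambda)$ such that for every $\gamma\in\Sim$ and every $X\in T_{g,n}(\Lambda)$,
$$\bigl|\log l_X(\gamma)-\log l_{\Phi_\Gamma(X)}(\gamma)\bigr|\le C_0.$$
This holds because a closed geodesic never enters the deep part of a cusp, so its length is computed in the region where $X$ and $\Phi_\Gamma(X)$ are bi-Lipschitz with a constant depending only on the collar geometry, i.e.\ only on $\Lambda$ and $S$; equivalently, each passage through a boundary collar alters the contribution by a bounded additive amount, and the number of passages is controlled by the length itself.

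With $C_0$ in hand the inequality $d_{Th}(Y_1,Y_2)\le d_A(X_1,X_2)+2C_0$ is immediate: for any $\gamma\in\Sim$ the comparison gives $\tfrac{l_{Y_2}(\gamma)}{l_{Y_1}(\gamma)}\le e^{2C_0}\tfrac{l_{X_2}(\gamma)}{l_{X_1}(\gamma)}$, and since $\Sim\subset\Arc\cup\Bnd\cup\Sim$ the right-hand side is at most $e^{2C_0}e^{d_A(X_1,X_2)}$ by the observation of \cite{LPST2}; taking the supremum over $\gamma$ and the logarithm yields the claim.

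The reverse inequality is the substantial one, and I would prove it by transporting an extremal Lipschitz map rather than comparing lengths arc by arc --- the latter fails because the length of an individual arc on a surface with boundary is \emph{not} controlled by any single closed-curve length (when the curve $\partial N(\alpha\cup\partial)$ degenerates, the arc length and that curve length move independently). Concretely: by Thurston's theorem on the punctured surfaces there is a map $g\colon Y_1\to Y_2$ homotopic to the identity with $\log\operatorname{Lip}(g)=d_{Th}(Y_1,Y_2)$. Truncate each cusp of $Y_i$ along the horocycle of length $\lambda_k$ to obtain compact surfaces $\overline Y_i$; near each cusp the two surfaces have isometric standard cusp neighbourhoods and $g$ is proper of degree one there, so after a homotopy supported in the cusps we may take $g$ to carry the length-$\lambda_k$ horocycle of $Y_1$ to that of $Y_2$, producing $\bar g\colon\overline Y_1\to\overline Y_2$ with $\log\operatorname{Lip}(\bar g)\le d_{Th}(Y_1,Y_2)+C'$. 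Next, build bi-Lipschitz homeomorphisms $h_i\colon X_i\to\overline Y_i$ that are the identity on the common interior part and a fixed model map on the collar/truncated-cusp region, realizing the marking identification and sending $\partial X_i$ to $\partial\overline Y_i$; since the collar of a geodesic of length $\lambda_k$ and the truncated cusp ending at the length-$\lambda_k$ horocycle are bi-Lipschitz with a constant depending only on $\Lambda$, one gets $\log\operatorname{Lip}(h_i^{\pm1})\le C_0'$. The composition $f=h_2^{-1}\circ\bar g\circ h_1\colon X_1\to X_2$ is then homotopic rel $\partial$ to the marking-preserving identification and satisfies $\log\operatorname{Lip}(f)\le d_{Th}(Y_1,Y_2)+C'+2C_0'$. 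As $f$ preserves every homotopy class of essential arc (rel $\partial$), boundary curve, and simple closed curve, we get $l_{X_2}(\xi)\le\operatorname{Lip}(f)\,l_{X_1}(\xi)$ for all $\xi\in\Arc\cup\Bnd\cup\Sim$, whence $d_A(X_1,X_2)\le\log\operatorname{Lip}(f)$ and the desired bound follows.

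Combining the two estimates with $C_1=\max\{2C_0,\,C'+2C_0'\}$ proves the theorem. The main obstacle is the uniform control in the third step: one must verify that the bi-Lipschitz constant of $h_i$ between the geodesic-boundary collar and the truncated cusp is independent of the bulk shape of $X_i$, so that $C_1$ depends only on $S$ and $\Lambda$ and never on the points $X_1,X_2$, and that the truncation of $g$ across the cusps costs only a bounded Lipschitz factor while keeping $f$ in the correct homotopy class rel $\partial$. Both are local statements about the standard collar and cusp models, but making them uniform over all of $T_{g,n}(\Lambda)$ --- including where interior curves are pinched --- is where the care is required.
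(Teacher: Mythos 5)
Your architecture is genuinely different from the paper's. The paper deduces the theorem from two ingredients: Theorem \ref{thm:arc-thu}, which shows $|d_A-d_{Th}|\leq C_2$ on $T_{g,n}(\Lambda)$ by explicit hyperbolic trigonometry (to each arc $\gamma$ it associates the closed curve $\alpha$ bounding the pants determined by $\gamma$ and the boundary, and compares $l_X(\gamma)$ with $l_X(\alpha)$), and Theorem \ref{thm:thu-almost}, quoted from \cite{Pan}, which compares $d_{Th}$ on $T_{g,n}(\Lambda)$ with $d_{Th}$ on $T_{g,n}(0)$ via $\Phi_\Gamma$; the triangle inequality then finishes. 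You instead prove the two one-sided estimates directly, one by a curve-by-curve length comparison between $X$ and $\Phi_\Gamma(X)$, the other by transporting Thurston's extremal Lipschitz map. Before discussing the gaps, one factual correction: your reason for rejecting the arc-by-arc comparison is wrong. Because $X_1$ and $X_2$ both lie in $T_{g,n}(\Lambda)$, the boundary lengths appearing in formula (\ref{eq:gamm}) are fixed, so $l_X(\gamma)$ and $l_X(\alpha)$ determine each other up to additive constants depending only on $\Lambda$; and in the degenerate regime $l_X(\alpha)\to 0$ the arc length does not ``move independently'' but is pinned between $r_0(\Lambda)>0$ and $K(\Lambda)$, so the arc ratio is bounded and any closed curve realizing ratio at least $1$ suffices. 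That is exactly how the paper's \S 3 proceeds.

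As for your own route, the skeleton is sound (a boundary-preserving $L$-Lipschitz map in the correct homotopy class does bound all ratios $l_{X_2}(\xi)/l_{X_1}(\xi)$ for $\xi\in\Arc\cup\Bnd\cup\Sim$, hence bounds $d_A$), but the two load-bearing claims are asserted rather than proved, and they are precisely where the content of the theorem lives. First, you need bi-Lipschitz (not merely quasiconformal) maps between the bordered pair of pants with boundary lengths $(l_1,l_2,\lambda_k)$ and the truncated cusped pants with boundary lengths $(l_1,l_2,0)$, boundary coherent so that they glue along $\Gamma$, with constant depending only on $\lambda_k$ and \emph{not} on $l_1,l_2$, which range over all of $(0,\infty)$ as interior pants curves are pinched or stretched; nothing cited in this paper gives that in the Lipschitz category (Theorem \ref{thm:BMMS} is quasiconformal, and quasiconformality does not control hyperbolic length ratios), and this uniform statement is essentially the content of the imported Theorem \ref{thm:thu-almost}. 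Second, your truncation of the extremal Lipschitz map $g:Y_1\to Y_2$ along horocycles ``of length $\lambda_k$'' is not well posed for general $\Lambda$, since horocycles of length greater than roughly $2$ need not bound embedded cusp neighbourhoods, and the claim that $g$ can be homotoped in the cusps to match horocycles at bounded Lipschitz cost, uniformly over $T_{g,n}(0)$, needs an argument. Until these two points are supplied with constants uniform over the whole Teichm\"uller space, the proposal is a plausible outline rather than a proof.
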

\remark  Papadopoulos-Su (\cite{PS}) considered the case where $\Lambda$ is close to zero, they showed that the constant $C_1$ in Theorem \ref{thm:arc-almost} will tend to zero if $\Lambda$ tends to zero.

\begin{proof}[Proof of Theorem \ref{thm:arc-almost}]
   To prove Theorem \ref{thm:arc-almost}, it suffices to verify that they satisfy the two conditions in Definition \ref{def}. The first condition follows from Theorem \ref{thm:arc-thu} and Theorem \ref{thm:thu-almost}. The second condition follows from the fact that $\Phi_\Gamma$ is a homeomorphism.
 \end{proof}
\begin{theorem} \label{thm:arc-thu}
The arc metric and the Thurston metric are almost-isometric in $T_{g,n}(\Lambda)$. More precisely, there is a constant $C_2$ depending on the surfaces $S$ and boundary lengths $\Lambda$ such that,
  \[ |d_A(X_1,X_2)-d_{Th}(X_1,X_2)|\leq C_2.\]
\end{theorem}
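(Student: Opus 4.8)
The plan is to prove the two one-sided estimates $d_{Th}\le d_A$ and $d_A\le d_{Th}+C_2$ separately. The first is essentially free; the second rests on comparing the length of an essential arc with that of a simple closed curve obtained by closing the arc up along the boundary.

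For the easy direction, recall from \cite{LPST2} that $d_A(X_1,X_2)=\log\sup_{[\alpha]\in\Arc\cup\Bnd\cup\Sim} l_{X_2}([\alpha])/l_{X_1}([\alpha])$. Since $\Sim\subseteq\Arc\cup\Bnd\cup\Sim$, this immediately gives $d_{Th}\le d_A$. Moreover, as $X_1,X_2\in T_{g,n}(\Lambda)$ have the same boundary lengths, every $\beta\in\Bnd$ contributes the ratio $l_{X_2}(\beta)/l_{X_1}(\beta)=1$. Writing $R:=\exp(d_{Th}(X_1,X_2))$, so that $R\ge 1$ and $l_{X_2}(\gamma)/l_{X_1}(\gamma)\le R$ for every $\gamma\in\Sim\cup\Bnd$, the whole problem therefore reduces to bounding $l_{X_2}(\alpha)/l_{X_1}(\alpha)$ by a fixed multiple of $R$, uniformly over all essential arcs $\alpha$.

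The geometric heart is a length comparison between an arc and its associated curves, which I would prove by pair-of-pants trigonometry and the collar lemma. For an essential arc $\alpha$, let $\partial_\alpha$ be the union of the boundary components of $S$ carrying the endpoints of $\alpha$, and let $N_\alpha$ be a regular neighbourhood of $\alpha\cup\partial_\alpha$; then $N_\alpha$ is a pair of pants, and its boundary curves that are not boundary-parallel are essential simple closed curves $\gamma_1(\alpha)$ (one curve if $\alpha$ joins two distinct components, two curves $\gamma_1,\gamma_2$ if both endpoints lie on one component). Setting $m:=\max_i\lambda_i$, I claim there are constants $\epsilon_0>0$ and $M\ge0$, depending only on $S$ and $\Lambda$, such that for every $X\in T_{g,n}(\Lambda)$
\begin{align*}
 l_X(\alpha)&\ge \epsilon_0,\\
 l_X(\gamma_j(\alpha))&\le 2\,l_X(\alpha)+2m\qquad\text{for each }j,\\
 l_X(\alpha)&\le 2\max_j l_X(\gamma_j(\alpha))+M.
\end{align*}
The first bound follows because the $\lambda_i$ are fixed and positive: the explicit formula for the orthogeodesic distance between two boundary components of a pair of pants (and the analogous arc from one component to itself) is bounded below independently of the free interior lengths. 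The second comes from closing $\alpha$ up: each $\gamma_j(\alpha)$ is freely homotopic to a loop consisting of $\alpha$, traversed once or twice, together with a subarc of $\partial_\alpha$ of length at most $m$. The third, bounding the arc by its longest associated curve, is the delicate one and is proved from the right-angled hexagon identities by showing that an essential arc in a pair of pants has length at most its perimeter plus a universal constant.

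Granting these, the conclusion follows by chaining ratios. Fix $\alpha$, choose $j$ maximising $l_{X_2}(\gamma_j(\alpha))$, and abbreviate $\gamma:=\gamma_j(\alpha)$. Applying the third inequality at $X_2$, then $l_{X_2}(\gamma)\le R\,l_{X_1}(\gamma)$, then the second at $X_1$, and finally $l_{X_1}(\alpha)\ge\epsilon_0$ with $R\ge1$, we get
\[
 l_{X_2}(\alpha)\le 2\,l_{X_2}(\gamma)+M\le 2R\,l_{X_1}(\gamma)+M\le 2R\bigl(2\,l_{X_1}(\alpha)+2m\bigr)+M\le \Bigl(4+\tfrac{4m+M}{\epsilon_0}\Bigr)R\,l_{X_1}(\alpha).
\]
With $C:=4+(4m+M)/\epsilon_0$ this reads $l_{X_2}(\alpha)/l_{X_1}(\alpha)\le C R$ for every arc, whence $\exp(d_A(X_1,X_2))\le C R=C\exp(d_{Th}(X_1,X_2))$ and $d_A\le d_{Th}+\log C$. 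Combined with the easy direction, this yields the theorem with $C_2=\log C$.

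I expect the third length inequality to be the main obstacle. The difficulty is that the associated curves $\gamma_j(\alpha)$ may themselves be pinched or stretched as $X$ varies, so no single curve controls $\alpha$ uniformly; when both endpoints of $\alpha$ lie on one boundary component one must use the \emph{longer} of $\gamma_1,\gamma_2$, which is why above the comparison curve is chosen depending on $X_2$ rather than fixed in advance. This is legitimate precisely because $R$ dominates the length ratio of \emph{every} simple closed curve, essential or boundary-parallel. Carrying this out rigorously amounts to a careful analysis of a single right-angled hexagon, including the degenerate limits in which one or more of the free boundary lengths of $N_\alpha$ tend to $0$ or $\infty$; the uniform lower bound $\epsilon_0$ requires similar but easier care.
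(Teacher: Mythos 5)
Your proposal is correct and follows essentially the same route as the paper: both take the easy inequality $d_{Th}\le d_A$ from the definitions, and for the reverse bound both pass to the pair of pants determined by an essential arc, compare the arc's length to the lengths of the associated essential boundary curves up to additive constants via right-angled hexagon identities (splitting into the two cases according to whether the arc joins two distinct boundary components or one component to itself, and taking the longer of the two curves in the latter case), use the uniform positive lower bound on arc lengths coming from the fixed boundary lengths $\Lambda$, and chain the resulting ratios. The only cosmetic difference is that you obtain the bound of the curve length by the arc length via a direct concatenation/homotopy argument rather than from the hexagon formula.
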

\remark Liu-Papadopoulos-Su-Th\'eret (\cite[Theorem 3.7]{LPST2})
proved that $ d_{Th}$ and $ d_A$ are almost isometric on the $\epsilon_0$-relative $\epsilon-$  thick part of the Teichm\"uller space of surfaces with boundary. Later,  Liu-Su-Zhong (\cite[Theorem 1.5]{LSZ}) proved that  the symmetrizations  of these two metrics  $ d_{L}(X_1,X_2):=\max\{d_{Th}(X_1,X_2),d_{Th}(X_2,X_1)\}$ and  $ d_{AL}(X_1,X_2):=\max\{d_{A}(X_1,X_2),d_{A}(X_2,X_1)\}$ are almost isometric on the $\epsilon$ thick part of the Teichm\"uller space of surfaces with boundary.
\begin{theorem}[\cite{Pan}]\label{thm:thu-almost}
  $(T_{g,n}(\Lambda),d_{Th})$ and  $(T_{g,n}(0),d_{Th})$ are almost isometric. More precisely, there is a constant $C_3$ depending on the surfaces $S$ and boundary lengths $\Lambda$ such that,
  \[ |d_{Th}(X_1,X_2)-d_{Th}(\Phi_\Gamma(X_1),\Phi_\Gamma(X_2))|\leq C_3.\]
\end{theorem}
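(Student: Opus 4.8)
The plan is to reduce the statement to a single uniform two-sided comparison of lengths of simple closed curves under $\Phi_\Gamma$, and then to establish that comparison by producing a marking-preserving homeomorphism $X\to\Phi_\Gamma(X)$ whose bi-Lipschitz constant is controlled only by $S$ and $\Lambda$. Concretely, I would first prove the lemma that there is a constant $D=D(S,\Lambda)$ with
\[ \Big|\log\frac{l_X([\alpha])}{l_{\Phi_\Gamma(X)}([\alpha])}\Big|\le D \]
for every $X\in T_{g,n}(\Lambda)$ and every $[\alpha]\in\Sim$. Granting this, the theorem is immediate: writing $l_{X_2}([\alpha])/l_{X_1}([\alpha])$ as the product of $l_{\Phi_\Gamma(X_2)}([\alpha])/l_{\Phi_\Gamma(X_1)}([\alpha])$ with the two correction factors $l_{X_2}([\alpha])/l_{\Phi_\Gamma(X_2)}([\alpha])$ and $l_{\Phi_\Gamma(X_1)}([\alpha])/l_{X_1}([\alpha])$, taking logarithms, and bounding each correction by $D$, one gets
\[ \Big|\log\frac{l_{X_2}([\alpha])}{l_{X_1}([\alpha])}-\log\frac{l_{\Phi_\Gamma(X_2)}([\alpha])}{l_{\Phi_\Gamma(X_1)}([\alpha])}\Big|\le 2D \]
for every $[\alpha]$. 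Since $d_{Th}$ is the logarithm of the supremum over $\Sim$ of such ratios and $|\sup_\alpha F-\sup_\alpha G|\le\sup_\alpha|F-G|$, the theorem follows with $C_3=2D$.

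To prove the lemma I would construct the comparison map geometrically. Both $X$ and $\Phi_\Gamma(X)$ carry the pants decomposition $\Gamma$ with identical cuff lengths $\mathcal L$ and identical twists $\mathcal T$, differing only in that the boundary cuffs have lengths $\Lambda$ in $X$ and $0$ (cusps) in $\Phi_\Gamma(X)$. Cutting both along $\Gamma$, it suffices to define the map pair-of-pants by pair-of-pants so that it is the identity near each curve of $\Gamma$, whence the pieces glue with matching twists. On every pair of pants none of whose cuffs is a boundary component of $S$ the two structures are isometric, and I take the identity. On the remaining pieces I use the thick--thin decomposition: removing the embedded collars about the three cuffs, the collars about the interior cuffs are intrinsically isometric model collars depending only on their lengths, while the thick part has injectivity radius and diameter bounded in terms of the Margulis constant and therefore admits a bi-Lipschitz identification with uniformly bounded constant respecting the cuff matching.

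The main obstacle is the collar about a boundary cuff $\beta_j$ inside a boundary-adjacent pair of pants $P(a,b,\lambda_j)$: I must compare it with a cusp neighbourhood of $\Phi_\Gamma(X)$ by a bi-Lipschitz map whose constant is bounded uniformly for $\lambda_j\in(0,\Lambda_{\max}]$, with $\Lambda_{\max}:=\max_j\lambda_j$, \emph{and independently of the interior cuff lengths $a,b$}. This is exactly where the boundary length enters and where the constant $C_3$ is produced; the delicate point is the uniformity in $a,b$, which rests on the fact that the collar of $\beta_j$ and the corresponding cusp region are intrinsic neighbourhoods whose geometry depends on $\lambda_j$ alone, with the bi-Lipschitz constant tending to $1$ as $\lambda_j\to0$ (recovering the remark after Theorem \ref{thm:arc-almost} that $C_3\to0$ as $\Lambda\to0$). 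Assembling the isometries on the interior collars, the uniform maps on the thick parts, and this collar-to-cusp map, with interpolation in thin buffer collars so that everything agrees near $\Gamma$, yields a marking-preserving $L$-bi-Lipschitz homeomorphism $X\to\Phi_\Gamma(X)$ with $L=L(S,\Lambda)$, and hence the lemma with $D=\log L$.

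As an alternative to the explicit map, I would note two routes that localise the same difficulty. Analytically, one regards $[0,\lambda_j]$ (with $\mathcal L,\mathcal T$ frozen) as a path in Teichm\"uller space and bounds $|\partial\log l_X([\alpha])/\partial\lambda_j|$ uniformly in $[\alpha]$ by a Gardiner--Wolpert length-variation estimate, integrating over the bounded interval. Alternatively, via Thurston's description of $d_{Th}$ as the logarithm of the minimal Lipschitz constant in the homotopy class, the uniformly bi-Lipschitz maps $X_i\to\Phi_\Gamma(X_i)$ give the metric comparison directly by pre- and post-composition. In every version the entire difficulty is concentrated in the single boundary-adjacent pair of pants, which I expect to be the main obstacle.
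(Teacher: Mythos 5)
The paper does not actually prove this statement: it is imported wholesale from the preprint \cite{Pan}, so there is no internal proof to compare against. Your reduction, however, is the natural one and is correct as far as it goes: if $\bigl|\log\bigl(l_X([\alpha])/l_{\Phi_\Gamma(X)}([\alpha])\bigr)\bigr|\le D$ uniformly in $X\in T_{g,n}(\Lambda)$ and $[\alpha]\in\Sim$, then the telescoping of ratios together with $|\sup F-\sup G|\le\sup|F-G|$ gives $C_3=2D$, and the second condition of Definition \ref{def} is free because $\Phi_\Gamma$ is a bijection. The problems are in how you propose to prove the lemma.

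First, a marking-preserving bi-Lipschitz \emph{homeomorphism} $X\to\Phi_\Gamma(X)$ cannot exist: $X$ is compact of finite diameter while $\Phi_\Gamma(X)$ is a complete cusped surface of infinite diameter, and for the same reason the compact half-collar of a boundary geodesic $\beta_j$ is not bi-Lipschitz to a cusp neighbourhood. The construction must target the truncated surface $\Phi_\Gamma(X)\setminus\textup{Cusp}_{\epsilon}$ (exactly as Theorem \ref{thm:BMMS} does in the Teichm\"uller-metric part of the paper), and one must then add the observation that closed geodesics of $\Phi_\Gamma(X)$ avoid cusp regions bounded by sufficiently short horocycles, and that closed geodesics of $X$ avoid the boundary half-collars, so that the length comparison survives the truncation. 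Second, your uniformity argument for the thick parts is not valid as stated: the complement of the cuff collars in a pair of pants $P(a,b,\lambda_j)$ does \emph{not} have diameter bounded by a Margulis-type constant when $a$ or $b$ is large (its boundary curves have length $a\sqrt{1+1/\sinh^2(a/2)}\to\infty$), so "bounded geometry, hence uniformly bi-Lipschitz" does not apply. What is actually needed is that $P(a,b,\lambda_j)$ and $P(a,b,0)$ can be matched by a boundary-coherent map that is the identity outside a region of \emph{uniformly} bounded size around the third cuff/cusp, uniformly over $(a,b)\in(0,\infty)^2$; this is the real content of the theorem and requires either explicit right-angled hexagon trigonometry (comparing the hexagon with side $\lambda_j/2$ to the degenerate hexagon with an ideal vertex) or a geometric-limit compactness argument over the compactified parameter square $[0,\infty]^2$. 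Neither is supplied, so the heart of the proof is still missing; the skeleton, though, is the right one.
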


\subsection{The Teichm\"uller metric}
The arc metric and the  Thurston metric describe the deformation of hyperbolic metric on the surface, while the Teichm\"uller metric describes the deformation of conformal structure ( complex structure). Given two marked complex structures $[X_1, f_1]$ and $[X_2,f_2]$, the Teichm\"uller metric is defined by
\[ d_T([X_1,f_1],[X_2,f_2])=\frac{1}{2}\log \inf\{K(f): f \textup{ is isotopic to } f_1\circ (f_2)^{-1}\},\]
where $K(f)$ represents the quasiconformal dilation of $f$.

For closed surfaces, Kerckhoff expressed  the Teichm\"uller metric in terms of the extremal length of simple closed curves in the following way. For any $X_1,X_2$ in the Teichm\"uller space,
\begin{equation}\label{eq:Kerchoff}
  d_T(X_1,X_2):=\frac{1}{2}\sup_{[\alpha]}\log\frac{\E_{X_2}([\alpha])}{\E_{X_1}([\alpha])},
\end{equation}
where the sup ranges over all  essential simple closed curves on the surface.

For surfaces with boundary, Liu-Papadopoulos-Su-Th\'eret (\cite{LPST}) developed similar result. They described the Teichm\"uller metric in terms of the extremal lengths of  essential arcs and boundary components.

The theorem below is our second main result.
\begin{theorem}\label{thm:FN}
  For small $\epsilon$, $(\T_{g,n}(\epsilon),d_T)$ and $( \T_{g,n}(0),d_T)$ are  almost isometric. More precisely,  %there is a constant $C_4$ depending on $S$ such that
  for any $X, Y\in \T_{g,b}(\epsilon)$,
  $$ |d_T(X_1,X_2)-d_T(\Phi_\Gamma(X_1),\Phi_\Gamma(X_2)) |\leq \log (n+3).$$
%Moreover, $C_4\to0$ as $\epsilon\to0$.
\end{theorem}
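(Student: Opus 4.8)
The plan is to verify the two conditions of Definition \ref{def} for the map $\Phi_\Gamma$, with $B=\log(n+3)$ and $A$ arbitrarily small. Condition (2) is immediate: since $\Phi_\Gamma$ is a homeomorphism it is surjective, so every $z\in\T_{g,n}(0)$ is $\Phi_\Gamma(X)$ for some $X$. Hence the entire content is the quantitative estimate (1). The natural tool is the extremal-length description of the Teichm\"uller metric on both sides: Kerckhoff's formula \eqref{eq:Kerchoff} on the punctured surface $\T_{g,n}(0)$, where the supremum ranges over $\Sim$, and the Liu--Papadopoulos--Su--Th\'eret formula on the bordered surface $\T_{g,n}(\epsilon)$, where the supremum ranges over $\Arc\cup\Bnd$ (and, harmlessly, $\Sim$). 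Writing $X_{0,i}=\Phi_\Gamma(X_i)$, both $2d_T(X_1,X_2)$ and $2d_T(X_{0,1},X_{0,2})$ are suprema of logarithms of ratios of extremal lengths, so it suffices to compare, class by class, the extremal length of a curve system on $X_i$ with that of a corresponding system on $X_{0,i}$.

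The heart of the argument is therefore a comparison lemma, which I would isolate and prove first: \emph{for all sufficiently small $\epsilon$ there is a correspondence $\alpha\mapsto\alpha^{*}$ from $\Arc\cup\Bnd\cup\Sim$ to $\Sim$, equal to the identity on simple closed curves and onto up to this comparison, such that for every $X\in\T_{g,n}(\epsilon)$}
\[
  \frac{1}{n+3}\,\E_{\Phi_\Gamma(X)}(\alpha^{*})\ \le\ \E_X(\alpha)\ \le\ (n+3)\,\E_{\Phi_\Gamma(X)}(\alpha^{*}).
\]
An essential simple closed curve of $X$ is sent to the same curve on $\Phi_\Gamma(X)$, while an essential arc (or boundary curve) $\alpha$ is closed up into a simple closed curve $\alpha^{*}$, namely the boundary of a regular neighbourhood of $\alpha$ together with the punctures it meets. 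To prove the two-sided bound I would use domain monotonicity of extremal length together with the fact that $X$ and $\Phi_\Gamma(X)$ are conformally identical away from the $n$ thin pieces: when $\epsilon$ is small the half-collar around a boundary geodesic of length $\epsilon$ and the corresponding cusp neighbourhood of $\Phi_\Gamma(X)$ are both wide embedded (half-)annuli, so any class restricted to the complement of these $n$ regions has the same extremal length on both surfaces. The only distortion then comes from modifying admissible metrics across the $n$ collar/cusp regions, which produces the factor $n$, and from the local pants combinatorics of closing an arc into a curve, which produces the bounded additive contribution giving the $+3$. The smallness of $\epsilon$ enters exactly here, to make these collars wide enough that the comparison constant is independent of $\epsilon$.

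Granting the lemma, condition (1) follows by telescoping. For each $\alpha$,
\[
  \frac{\E_{X_2}(\alpha)}{\E_{X_1}(\alpha)}
  =\frac{\E_{X_2}(\alpha)}{\E_{X_{0,2}}(\alpha^{*})}\cdot
   \frac{\E_{X_{0,2}}(\alpha^{*})}{\E_{X_{0,1}}(\alpha^{*})}\cdot
   \frac{\E_{X_{0,1}}(\alpha^{*})}{\E_{X_1}(\alpha)},
\]
where the first and third factors lie in $[(n+3)^{-1},\,n+3]$. Taking logarithms, then the supremum over $\alpha$, and using that $\alpha\mapsto\alpha^{*}$ is onto $\Sim$ up to the comparison (so that the punctured supremum is also captured), yields
\[
  \bigl|\,2d_T(X_1,X_2)-2d_T(X_{0,1},X_{0,2})\,\bigr|\ \le\ 2\log(n+3),
\]
which is precisely $|d_T(X_1,X_2)-d_T(\Phi_\Gamma(X_1),\Phi_\Gamma(X_2))|\le\log(n+3)$.

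I expect the main obstacle to be the comparison lemma, and within it two points. First, the passage between \emph{arcs} on the bordered surface and \emph{simple closed curves} on the punctured surface is delicate, since an arc acquires vanishing extremal length once its endpoints may run into a cusp; the closing-up construction and its extremal-length bookkeeping must be carried out carefully, and one must check that the boundary classes in $\Bnd$, which become peripheral on $\Phi_\Gamma(X)$, neither dominate the bordered supremum nor are needed to realize the punctured one. Second, one must extract the clean, $\epsilon$-independent constant $n+3$ rather than a constant that degenerates as the collars pinch. Establishing the uniform conformal comparison of $X$ and $\Phi_\Gamma(X)$ outside the $n$ thin regions, via the collar lemma for small $\epsilon$, is the technical core on which both points rest.
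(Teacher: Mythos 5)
Your reduction to a class-by-class extremal length comparison rests on a lemma that is false as stated, and this is a genuine gap rather than a technicality. You claim a correspondence $\alpha\mapsto\alpha^*$ from $\Arc\cup\Bnd\cup\Sim$ to $\Sim$ with $\tfrac{1}{n+3}\E_{\Phi_\Gamma(X)}(\alpha^*)\le \E_X(\alpha)\le (n+3)\E_{\Phi_\Gamma(X)}(\alpha^*)$ uniformly over $X\in\T_{g,n}(\epsilon)$. Take an essential arc $\gamma$ with endpoints on a boundary component $\beta_i$ of length $\epsilon$. Any representative of $\gamma$ must cross the embedded half-collar of $\beta_i$, an annulus whose modulus blows up as $\epsilon\to 0$, so by the serial rule $\E_X(\gamma)$ is bounded below by a positive constant $c(\epsilon)$ (in fact $\E_X(\gamma)\gtrsim 1/\epsilon$), uniformly over all $X\in\T_{g,n}(\epsilon)$. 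On the other hand, $\gamma^*$ is a boundary curve of the pair of pants determined by $\gamma$, and nothing prevents you from pinching that curve: choosing $X$ with $l_X(\gamma^*)=t\to 0$ forces $\E_{\Phi_\Gamma(X)}(\gamma^*)\approx t/\pi\to 0$ by Maskit's lemma, while $\E_X(\gamma)$ stays bounded below. Hence no constant, let alone $n+3$, makes the upper bound hold, and the telescoping collapses. (Note that the superficially similar step in the proof of Theorem \ref{thm:arc-thu} compares \emph{ratios across two surfaces} using the additive relation $l_X(\alpha)=2l_X(\gamma)+O(1)$ for hyperbolic length; that additive relation does not transfer to a multiplicative per-surface comparison of extremal lengths, because Maskit's bounds $l/\pi\le\E\le (l/2)e^{l/2}$ leave an exponential gap.) A secondary weakness: your assertion that $X$ and $\Phi_\Gamma(X)$ are ``conformally identical away from the $n$ thin pieces'' is not true — the Fenchel--Nielsen map replaces each boundary pair of pants wholesale — and the correct statement is the quasiconformal embedding of Buser--Makover--Muetzel--Silhol with dilation $1+2\epsilon^2$, which must be invoked, not assumed.

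The paper's route avoids arcs entirely on the extremal-length side, and this is the idea you are missing. Proposition \ref{prop:teich} shows that for $X,Y\in\T_{g,n}(\epsilon)$ one already has $|d_T(X,Y)-\tfrac12\sup_{\alpha\in\Sim}|\log(\E_Y(\alpha)/\E_X(\alpha))||\le\log(n+2)$: one passes to the double, takes the horizontal lamination $h^d$ of the initial quadratic differential of the Teichm\"uller map, and uses that $\partial X$ is horizontal to decompose $h^d$ into boundary components plus an \emph{interior} lamination approximable by simple closed curves; Lemma \ref{lem:extremal} and Maskit's lemma then absorb the boundary terms into the additive constant. After that, only interior simple closed curves need to be compared between $X$ and $\Phi_\Gamma(X)$, where the identity correspondence works with multiplicative constants tending to $1$ (Theorem \ref{thm:BMMS} plus Proposition \ref{prop:ext-est-1}). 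To repair your argument you would either need to prove an analogue of Proposition \ref{prop:teich} discarding the arcs, or replace your per-surface lemma by a genuinely two-surface ratio comparison for extremal lengths of arcs versus curves, which is a substantially harder statement.
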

\remark The constant $\log (n+3)$ is not optimal.

The organization of this paper is as following. In \S 2, we  recall some basic concepts and facts. In \S 3, we  prove Theorem \ref{thm:arc-thu}. In \S4, we prove Theorem \ref{thm:FN}. Finally, we collect a few questions in \S 5.

\section{Preliminary}
\subsection{(Sckottky) double  and Teichm\"uller map}\label{ssec:double}
Let  $X$ be a Riemann surface with nonempty boundary. We can represent $X$ as $H/G$, where $H$ is the upper half plane and $G$ is a torsion-free Fuchsian group of second kind. There is an infinite set $\mathcal I$ of open intervals $I$ on the extended real axis $\mathbb R\cup \infty$ such that $G$ acts properly discontinuously on $H\cup J\cup L$ where $L$ is the lower half plane and $J$ is the union of all $I\in\mathcal I$. $X^d:=(H\cup J\cup L)/G$ is called the (Schottky) \textit{double} of $X$ and $\bar X:=L/G$ is called the \textit{mirror image} of $X$. The restriction of the hyperbolic metric on $X^d$ to $X$ is called the \textit{intrinsic metric} on $X$.  It is clear that in the intrinsic metric each boundary component is a simple closed geodesic. The double of any essential geodesic arc on $X$  is a simple closed geodesic on $X^d$.

An \textit{admissible quadratic differential} on $X$ is the restriction to $X$  of a holomorphic quadratic differential $q^d$ on $X^d$  such that
\begin{enumerate}
  \item at each puncture on $X$,  $q^d$ has at worst a first order pole.
  \item $\partial X $ is an $q^d$-horizontal line.
\end{enumerate}
Note that the symmetry requires that the zeroes of $q^d$ on $\partial X$ have even order.  Away from the zeroes of $q^d$, there is a local coordinate
$ \zeta=\xi+i\eta$
such that $q^d=d\zeta^2$. Let $ \bar\zeta=K\xi+i\eta$, $0<K<\infty$ . $\bar \zeta$ defines a new Riemann surface $\bar X^d$. The map $f:\zeta \mapsto \bar \zeta$ is called the \textit{Teichm\"uller map} from $X^d$ to $\bar X^d$ with initial quadratic differential $q^d$. The restriction of $f$ to $X$ is called  the \textit{Teichm\"uller map} from $X$ to $\bar X$ with initial quadratic differential $q$.
 Given two marked Riemann surfaces $[X_1,f_1],[X_2,f_2]$ with boundary, there is a unique Teichm\"uller map $f:X_1\to X_2$ homotopic to $f_2\circ f_1^{-1}$ minimizing the quasiconformal dilation (\cite{Abikoff}) such that the initial quadratic differential is an admissible quadratic differential on $X$.
\subsection{Measured lamination}
Given a hyperbolic surface $X$ with nonempty geodesic boundary, a simple geodesic   is  one of the  four types below:
\begin{itemize}
  \item an essential simple closed geodesic;
  \item a geodesic boundary component;
  \item an essential geodesic arc;
  \item  an infinite  geodesic  in the interior.
\end{itemize}
A \textit{geodesic lamination} $m$ on $X$ is a closed subset of $X$ consisting of mutually disjoint simple geodesics which  are called leaves of this geodesic lamination. A \textit{transverse invariant measure} $\mu$ of a geodesic lamination $m$ is a Radon measure defined on every
arc $k$ transverse to the support of $m$ such that  $\mu$ is invariant with respect to any
homotopy of $k $ relative to the leaves of $m$. A \textit{ measured geodesic lamination} is a lamination
$m$ endowed with a transverse invariant measure $\mu$. For simplicity, we denote by $\mu$ the measured geodesic lamination $(m,\mu)$. Each measured geodesic lamination $\mu$ induces a functional $i_\mu$ over $\Sim\cup\Bnd$ in the following way:
\begin{eqnarray*}
  i_\mu:\Sim\cup\Bnd&\longrightarrow &\mathbb R_{\geq0}\\
  \ [\alpha]&\longmapsto& i(\mu,[\alpha]):=\inf_{\alpha'\in[\alpha]}\int_{\alpha'} d\mu.
\end{eqnarray*}
Two measured geodesic laminations $\mu,\mu'$  are said to be equivalent if $i(\mu,[\alpha])=i(\mu',[\alpha])$ for any $[\alpha]\in\Sim\cup\Bnd$. Denote by $\mathcal{ML}(X)$ the space of equivalence classes of measured geodesic laminations  on $X$ equipped with the topology that $\mu_n$ converges to $\mu$ if for any $[\alpha]\in\Sim\cup\Bnd$, $i(\mu_n,[\alpha])$ converges to $i(\mu,[\alpha])$.   Since there is a one-to-one correspondence between $\mathcal{ML}(X)$ and $\mathcal{ML}(X')$ for two different hyperbolic metrics $X$ and $X'$, we denote by $\ML$ the space of equivalence classes of measured geodesic lamination without pointing to any specific hyperbolic metric. Hubbard and Masur (\cite{HM}) proved that there is a homeomorphism between $\ML$ and the space of the horizontal measured  foliations of admissible quadratic differentials on $X$.  For surfaces of finite type, $\Sim\times \mathbb R_+$ is dense in $\ML$ in this topology. But for the surfaces with boundary, this is no longer true. The simplest counterexample is an essential geodesic arc since it is not in the closure of $\Sim\times \mathbb R_+$. Let $\mathcal{ML}_0(\mathcal S)$ be a subset of $\ML$ consisting of measured foliations whose leaves are either essential simple closed geodesics or infinite geodesics in the interior. It is  clear  that $\Sim\times \mathbb R_+$ is dense in $\mathcal{ML}_0(\mathcal S)$.
%where the inf ranges over every simple closed curve $\alpha'$ homotopic to $\a$.

\subsection{Extremal length}
Let $\alpha$ be a simple closed curve or an essential arc, and $X$ be a Riemann surface. A \textit{conformal metric} on $X$ is a metric which can be expressed as $\rho(z)|dz|$ locally. The extremal length of $\alpha$ on $X$ is defined by:
\begin{equation}
  \E_X(\alpha):=\sup_{\rho}\frac{l^2_\rho(\alpha)}{Area(\rho)},
\end{equation}
where the sup ranges over all the conformal metrics on $X$, $Area(\rho)$ is the area of $X$ endowed with the metric $\rho$, and $l_\rho(\alpha):=\inf_{\alpha'\in[\alpha]}\int _{\alpha'}\rho |dz|$. It is clear that $$l^2_{a\rho}(\alpha)/Area(a\rho)=l^2_{\rho}(\alpha)/Area(\rho)$$
 for any positive constant $a$. There exist a unique conformal metric up to scaling realizing the supremum which is called the \textit{extremal metric} (see \cite{Strebel}).
The extremal length is a conformal invariant. For surfaces without boundary, Kerckhoff extended the definition of extremal length from $\Sim\times \mathbb R_+$ to $\ML$.  For surfaces with  boundary, this extension also holds by considering the double $X^d$ of $X$.

The following lemmas will be used in this paper.
\begin{lemma}\label{lem:extremal}
  Suppose $X\in T_{g,n}(S)$. Let $\mu=\mu_1+\mu_2+\cdots+\mu_k$ be a measured geodesic lamination where $\mu_j\in\mathbb R^+\times (\Sim\cup\Bnd)$, $j=1,2,\cdots,k$. Then
  \[\max_{1\leq j\leq k}\{\E_X(\mu_j)\}\leq\E_X(\mu)\leq k^2\max_{1\leq j\leq k}\{\E_X(\mu_j)\}.\]
\end{lemma}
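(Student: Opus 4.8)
The plan is to derive both inequalities directly from the variational (conformal-metric) definition of extremal length, extended to $\ML$, using only two structural facts. First, because $\mu=\mu_1+\cdots+\mu_k$ is a genuine measured geodesic lamination its components are pairwise disjoint, so for every conformal metric $\sigma$ the $\sigma$-lengths are additive,
\[ l_\sigma(\mu)=\sum_{j=1}^k l_\sigma(\mu_j), \]
with $l_\sigma(w\gamma)=w\,l_\sigma(\gamma)$ on each weighted component. Second, for any measured lamination $\nu$ and any conformal metric $\sigma$ the very definition of extremal length gives the pointwise inequality
\[ l_\sigma(\nu)^2\le \E_X(\nu)\,\mathrm{Area}(\sigma). \]
No finer analytic input is required; everything reduces to these two observations together with Cauchy--Schwarz.

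For the lower bound $\max_{1\le j\le k}\E_X(\mu_j)\le\E_X(\mu)$, I would argue by monotonicity. Fix $j$ and, given $\epsilon>0$, pick a conformal metric $\rho$ that is $\epsilon$-almost extremal for $\mu_j$. Since all terms are nonnegative, additivity gives $l_\rho(\mu)=\sum_i l_\rho(\mu_i)\ge l_\rho(\mu_j)$, and therefore
\[ \E_X(\mu)\ge \frac{l_\rho(\mu)^2}{\mathrm{Area}(\rho)}\ge \frac{l_\rho(\mu_j)^2}{\mathrm{Area}(\rho)}\ge \E_X(\mu_j)-\epsilon. \]
Letting $\epsilon\to0$ and then maximizing over $j$ yields the claim.

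For the upper bound I would estimate $l_\sigma(\mu)$ for an \emph{arbitrary} conformal metric $\sigma$, so that the conclusion follows after taking a supremum. Combining additivity, the Cauchy--Schwarz inequality $\left(\sum_j a_j\right)^2\le k\sum_j a_j^2$, the defining inequality applied to each $\mu_j$, and finally $\sum_j \E_X(\mu_j)\le k\max_j \E_X(\mu_j)$, I get
\[ l_\sigma(\mu)^2=\Big(\sum_{j=1}^k l_\sigma(\mu_j)\Big)^2\le k\sum_{j=1}^k l_\sigma(\mu_j)^2\le k\sum_{j=1}^k \E_X(\mu_j)\,\mathrm{Area}(\sigma)\le k^2\max_{1\le j\le k}\E_X(\mu_j)\,\mathrm{Area}(\sigma). \]
Dividing by $\mathrm{Area}(\sigma)$ and taking the supremum over all conformal metrics $\sigma$ gives $\E_X(\mu)\le k^2\max_{1\le j\le k}\E_X(\mu_j)$.

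The displayed computations are routine; the only point that I expect to require care, and hence the main (albeit mild) obstacle, is the rigorous justification of the two structural facts for measured laminations rather than for a single weighted curve. The additivity of $l_\sigma$ rests precisely on the components of $\mu$ being simultaneously realizable by disjoint curves, which is exactly what it means for $\mu$ to be a measured geodesic lamination; and the defining inequality $l_\sigma(\nu)^2\le\E_X(\nu)\,\mathrm{Area}(\sigma)$ must be invoked for the Kerckhoff extension of extremal length. Since for surfaces with boundary $\E_X$ is defined through the double $X^d$, I would check that this variational inequality passes to $X$ unchanged, so that the argument is insensitive to the doubling.
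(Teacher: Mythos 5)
Your proof is correct and follows essentially the same route as the paper: both directions rest on additivity of $l_\sigma$ over the disjoint components, the variational definition of $\E_X$, and the pointwise bound $l_\sigma(\nu)^2\le \E_X(\nu)\,\mathrm{Area}(\sigma)$. The only (immaterial) differences are that the paper plugs in the actual extremal metrics of $\mu_1$ and $\mu$ where you use almost-extremal and arbitrary test metrics, and that it bounds $\bigl(\sum_j\sqrt{\E_X(\mu_j)}\bigr)^2\le k^2\max_j\E_X(\mu_j)$ directly where you route through Cauchy--Schwarz.
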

\begin{proof}
  Let $\rho_i, \rho_\mu$ be the extremal metrics of $\mu_i$ and $\mu$ respectively such that $Area(\rho_i)=Area(\rho_\mu)=1$, $i=1,2,\cdots,k$. Without loss of generality, we assume that
  $\E_X(\mu_1)=\max_{1\leq j\leq k}\{\E_X(\mu_j)\}$. Then
  \[\E_X(\mu)\geq \frac{l^2_{\rho_1}(\mu)}{Area(\rho_1)}\geq \frac{l^2_{\rho_1}(\mu_1)}{Area(\rho_1)}=\E_X(\mu_1).\]
  On the other hand,
  \begin{eqnarray*}
    \E_X(\mu)&=& l^2_{\rho_\mu}(\mu)\\
    &=&(l_{\rho_\mu}(\mu_1)+l_{\rho_\mu}(\mu_2)+\cdots+l_{\rho_\mu}(\mu_k))^2\\
    &\leq&(\sqrt{\E_X(\mu_1)}+\sqrt{\E_X(\mu_2)}+\cdots+\sqrt{\E_X(\mu_k)})^2\\
    &\leq&k^2\max_{1\leq j\leq k}\{\E_X(\mu_j)\}.
  \end{eqnarray*}
\end{proof}

\begin{lemma}[Maskit, \cite{Maskit}]\label{Maskit}
  Let $Y\in\T_{g,n}(0)$ and $\alpha$ be a nontrival simple closed curve , then
  \begin{enumerate}
    \item $l_X(\alpha)$ and $\E_X(\alpha)$ goes to zero together, and  $$\lim_{l_X(\alpha)\to0}l_X(\alpha)/\E_X(\alpha)=\pi.$$
    \item  $ \frac{l_X(\alpha)}{\pi}\leq \E_X(\alpha)\leq \frac{l_X(\alpha)}{2}e^{l_X(\alpha)/2}. $
  \end{enumerate}
\end{lemma}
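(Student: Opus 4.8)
The plan is to compare the two Teichm\"uller metrics curve-by-curve through their extremal-length descriptions and to feed a single uniform comparison into a short supremum estimate. On the punctured surface $\Phi_\Gamma(X)\in\T_{g,n}(0)$ the metric $d_T$ is Kerckhoff's supremum \eqref{eq:Kerchoff} over essential simple closed curves, while on the bordered surface $X\in\T_{g,n}(\epsilon)$ the metric is the Liu--Papadopoulos--Su--Th\'eret supremum (\cite{LPST}) over essential arcs, boundary components and interior simple closed curves. First I would fix a correspondence $\gamma\mapsto\widehat{\gamma}$ between the two index sets: an interior simple closed curve corresponds to the same curve on $\Phi_\Gamma(X)$, and an essential arc $\alpha$ corresponds, via the double $X^d$ (where $\alpha$ becomes a closed curve), to the simple closed curve $\widehat\alpha$ on $\Phi_\Gamma(X)$ bounding a regular neighbourhood of $\alpha$ together with its endpoint punctures. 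The boundary components of $X$ are treated separately, since on $\T_{g,n}(\epsilon)$ their extremal lengths are pinned (all boundaries have length $\epsilon$) and contribute a uniformly bounded amount to $d_T$, whereas on $\Phi_\Gamma(X)$ they have become punctures.

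The heart of the matter is the uniform estimate that for $\epsilon$ small, every $X\in\T_{g,n}(\epsilon)$, and every $\gamma$ in the (arc and interior-curve) index set,
\[
\frac{1}{n+3}\,\E_{\Phi_\Gamma(X)}(\widehat\gamma)\ \le\ \E_X(\gamma)\ \le\ (n+3)\,\E_{\Phi_\Gamma(X)}(\widehat\gamma).
\]
Granting this, the theorem is a two-line computation. Writing $Y_i=\Phi_\Gamma(X_i)$ and applying the estimate at $X_1$ and at $X_2$,
\[
\Bigl|\log\frac{\E_{X_2}(\gamma)}{\E_{X_1}(\gamma)}-\log\frac{\E_{Y_2}(\widehat\gamma)}{\E_{Y_1}(\widehat\gamma)}\Bigr|\ \le\ 2\log(n+3).
\]
Since $\gamma\mapsto\widehat\gamma$ is a bijection of index sets, taking suprema and dividing by $2$ gives $|d_T(X_1,X_2)-d_T(Y_1,Y_2)|\le\log(n+3)$, which is condition (1) of Definition \ref{def}; condition (2) is immediate because $\Phi_\Gamma$ is a homeomorphism.

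To prove the comparison I would exploit that $\Phi_\Gamma$ changes $X$ only near the boundary. For $\epsilon$ small the collar lemma provides an embedded half-collar of large conformal modulus about each of the $n$ boundary geodesics, and $\Phi_\Gamma$ replaces each half-collar by a cusp neighbourhood while leaving the conformal structure on the common core (the complement of the collars, carrying identical Fenchel--Nielsen data) unchanged. Given $\gamma$, I would decompose it into its subarcs in the common core and the subarcs running through the individual collar/cusp regions, adding the at most three pieces needed to resolve the pair of pants containing the endpoints of an arc; this yields a decomposition of $\gamma$ into at most $n+3$ measured pieces. Core pieces have identical extremal length on $X$ and $\Phi_\Gamma(X)$, while each collar/cusp piece is estimated from the explicit conformal model of a half-collar versus a cusp, with Lemma \ref{Maskit} controlling the extremal length associated to the short, soon-to-be-pinched boundary curves. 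Lemma \ref{lem:extremal} then reassembles the pieces, and the count of at most $n+3$ pieces is what produces the constant $n+3$.

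The step I expect to be the main obstacle is precisely this uniform comparison, where two points require care. First, the correspondence between essential arcs on the bordered surface and simple closed curves on the punctured surface is cleanest on the double $X^d$, so one must check that passing to the double and back distorts extremal length by a controlled factor and does not inflate the piece count. Second, the collar-versus-cusp estimate must be uniform over all $\gamma$, including curves that wind many times inside a collar before $\Phi_\Gamma$ is applied; here the largeness of the collar modulus for small $\epsilon$ is essential, and it is exactly the non-optimality acknowledged in the remark that lets one absorb the winding into the crude factor $n+3$.
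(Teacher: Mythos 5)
You have proved the wrong statement. The statement in question is Lemma \ref{Maskit}, a pointwise comparison, on a single punctured hyperbolic surface $X$, between the hyperbolic length and the extremal length of one simple closed curve: the two-sided bound $l_X(\alpha)/\pi\le \E_X(\alpha)\le \frac{l_X(\alpha)}{2}e^{l_X(\alpha)/2}$ together with the asymptotics $l_X(\alpha)/\E_X(\alpha)\to\pi$ as $l_X(\alpha)\to 0$. Your proposal instead sketches a proof of Theorem \ref{thm:FN}: everything in it --- Kerckhoff's formula, the correspondence $\gamma\mapsto\widehat\gamma$ via the double, the map $\Phi_\Gamma$, the collar-versus-cusp comparison, the constant $\log(n+3)$ --- is about comparing two Teichm\"uller metrics on two different Teichm\"uller spaces, and at no point do you bound $\E_X(\alpha)$ by a function of $l_X(\alpha)$, which is the entire content of the lemma. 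Moreover, your sketch explicitly invokes ``Lemma \ref{Maskit} controlling the extremal length associated to the short, soon-to-be-pinched boundary curves,'' so read as a proof of Lemma \ref{Maskit} it is circular, and read as intended it answers a different question. (Note also that the paper itself offers no proof of this lemma; it is quoted from Maskit \cite{Maskit}.)

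For the record, a correct proof is a modulus computation requiring no Teichm\"uller-theoretic input. For the lower bound, the annular cover of $X$ determined by $\alpha$ is conformally $\mathbb{H}/\langle z\mapsto e^{l_X(\alpha)}z\rangle$, an annulus of modulus $\pi/l_X(\alpha)$; since $1/\E_X(\alpha)$ equals the supremum of the moduli of annuli embedded in $X$ with core homotopic to $\alpha$ (the Jenkins--Strebel characterization), and every such annulus lifts conformally and injectively into the annular cover, one gets $1/\E_X(\alpha)\le \pi/l_X(\alpha)$, i.e.\ $\E_X(\alpha)\ge l_X(\alpha)/\pi$. For the upper bound, the collar lemma provides an embedded collar about the geodesic representative of $\alpha$ whose modulus is explicitly bounded below in terms of $l_X(\alpha)$, and monotonicity of extremal length gives $\E_X(\alpha)\le 1/\mathrm{mod}(\mathrm{collar})$, which an elementary estimate turns into $\E_X(\alpha)\le \frac{l_X(\alpha)}{2}e^{l_X(\alpha)/2}$. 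Finally, as $l_X(\alpha)\to 0$ the collar modulus is asymptotic to $\pi/l_X(\alpha)$, so the two bounds pinch, yielding both that $l_X(\alpha)$ and $\E_X(\alpha)$ tend to zero together and that $l_X(\alpha)/\E_X(\alpha)\to\pi$. None of this machinery appears in your proposal, so as an argument for the stated lemma it has no overlap with what is needed.
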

\remark The statements above also holds for $X\in\T_{g,n}(\Lambda)$ with $\Lambda\in\mathbb R_+^n$.  In fact, suppose $X\in T_{g,n}(\Lambda)$, let $X^d$ be the double of $X$. Let $\alpha$ be a simple closed curve on $X$ and $\alpha^d$ be its double on $X^d$. Then $l_X(\alpha)=l_{X^d}(\alpha)/2$ and $\E_X(\alpha)=\E_{X^d}(\alpha)/2$.
\subsection{Fenchel-Nielsen coordinates}\label{ssect:FN}

 \begin{figure}[b]
  \subfigure[]
  {
  \includegraphics[width=50mm]{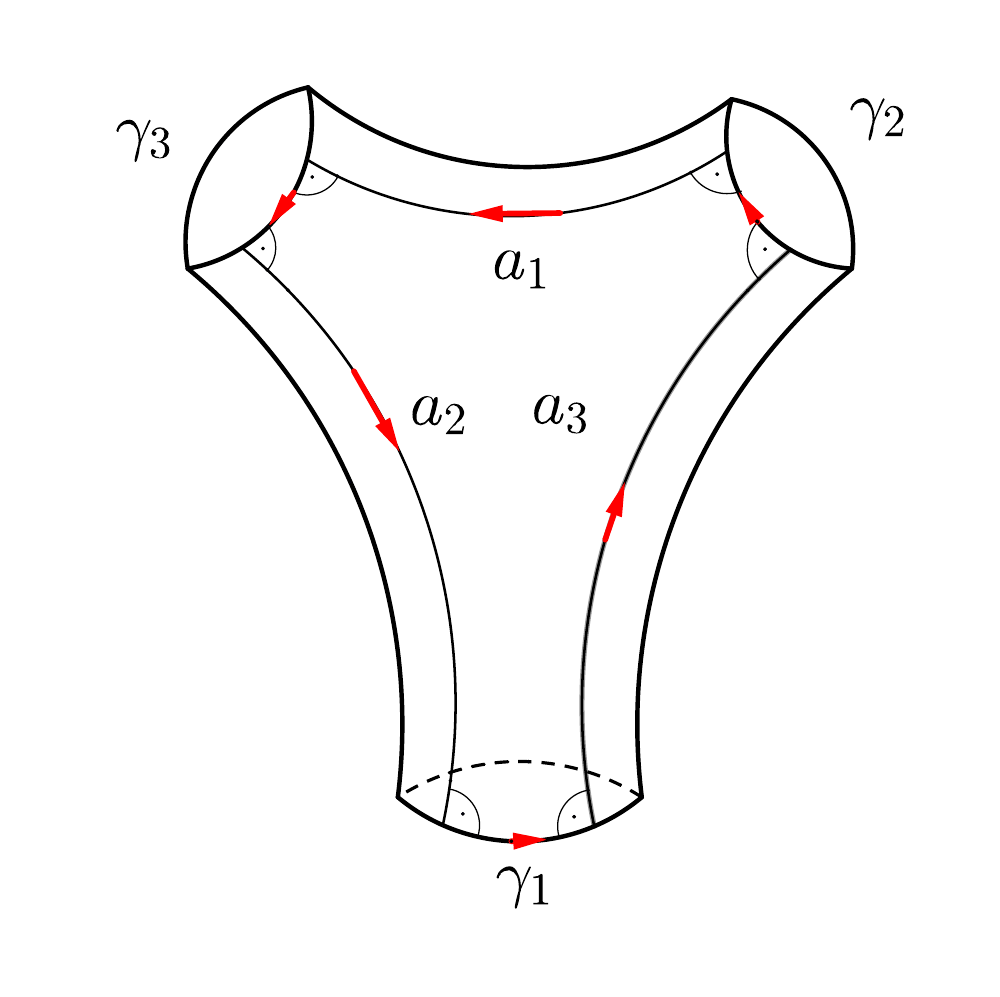}
  \label{fig:Y}
  }
    \subfigure[]
  {
  \includegraphics[width=50mm]{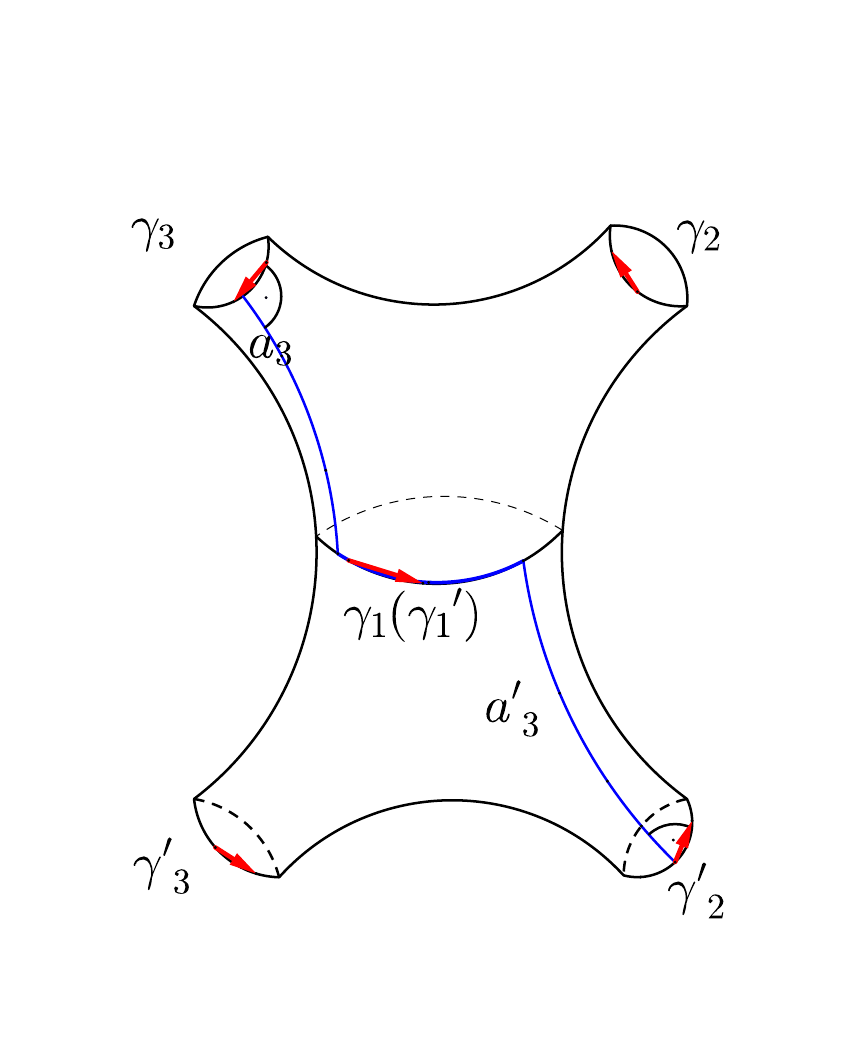}
  \label{fig:Y-Yt}
  }
  \caption{}
  \end{figure}
 Let $R$ be a pair of pants with boundaries $\{\gamma_1,\gamma_2,\gamma_3\}$. Let $a_1,a_2, a_3$ be three geodesic arcs orthogonal to the boundaries (see Figure \ref{fig:Y}). Choose an orientation for each boundary  such that $R$ is on the left. Let $\gamma_i:[0,1]\to \gamma_i$ be a parametrization of $\gamma_i$ with constant speed such that $\gamma_1([0,1/2]),$ $a_3,\gamma_2([0,1/2])$, $a_1$, $\gamma_3([0,1/2])$, $ a_3$ consist a hexagon. We call this parametrization a \textit{stand parametrization}. A homeomorphism $f$ between two pairs of pants $R,R'$ is called \textit{boundary coherent} if $f\circ \gamma_i(s)=\gamma_i'(s)$ for $s\in [0,1]$ and $i=1,2,3$.

  For two pairs of pants $R,R'$,  if $l(\gamma_i)=l(\gamma')$ for some $i=1,2,3$, we can  paste $R$ and $R'$  along $\gamma,\gamma'$ in the following way (see Figure \ref{fig:Y-Yt})
 \[ \gamma_i(s)=\gamma'_i(t-s)\]
 for some $t\in\mathbb R$. We say $R$ and $R'$ are pasted along $\gamma_i$ with twist $t$.

Now we give an explaination for the Fenchel-Nielsen coordinates $(\mathcal L,\mathcal T, \Lambda)$. Let $\Gamma=\{\gamma_1,\cdots,\gamma_{3g-3+n}\}$ be a pants decomposition of $X$ and $\{R_1,\cdots,R_{2g-2+n}\}$ be the corresponding $2g-2+n$ pairs of pants with stand parametrization. $\mathcal L$ and $\Lambda$ determine  these $2g-2+n$ pairs of pants,  and $\mathcal T$ tells us how to paste these pairs of pants.

\section{proof of Theorem \ref{thm:arc-thu}}
Let $\gamma\in A(S)$ be an arc connecting the boundary components  $\beta_i$ and $\beta_j$ ($\beta_i$ may equal to $\beta_j$). Then for any hyperbolic structure $X$, there is a unique geodesic in the relative homotopy class of $\gamma$, which is orthogonal to $\beta_i,\beta_j$ at each endpoint.  We still denote it by $\gamma$. It is not hard to see that a tubular neighborhood of $\beta_i\cup\beta_j\cup\gamma$ is a topological pair of pants. Let us call this pants determined by $\gamma$.

\

\begin{proof}(proof of Theorem \ref{thm:arc-thu})
It follows from the definitions that
\[d_{Th}(X_1,X_2)\leq d_A(X_1,X_2) \] for any
$X_1,X_2\in T_{g,n}$.

To control the arc metric from above by the Thurston metric, it suffices to find an essential simple closed curve $\alpha$ for each essential arc $\gamma \in \mathcal A'(S):=\{\gamma\in\mathcal A(S): l_{X_2}(\gamma)>l_{X_1}(\gamma)\}$ ($\alpha$ depends on $\gamma$) such that
$$\frac{l_{X_2}(\alpha)}{l_{X_1}(\alpha)}\geq C\frac{l_{X_2}(\gamma)}{l_{X_1}(\gamma)}$$
for some constant $C$ which depends on the surface $S$ and the boundary lengths $\Lambda$.
We discuss for the two cases.

 \textbf{Case(1)}:  $\gamma$ connects  two different boundary components $\beta_i,\beta_j$, see Figure \ref{fig:Y1}.
 Then there is another simple closed cure $\alpha\in \Sim$, such that $\beta_i,\beta_j,\alpha$ are the boundaries of the pants determined by $\gamma$.

  \begin{figure}[b]
  \subfigure[]
  {
  \includegraphics[width=50mm]{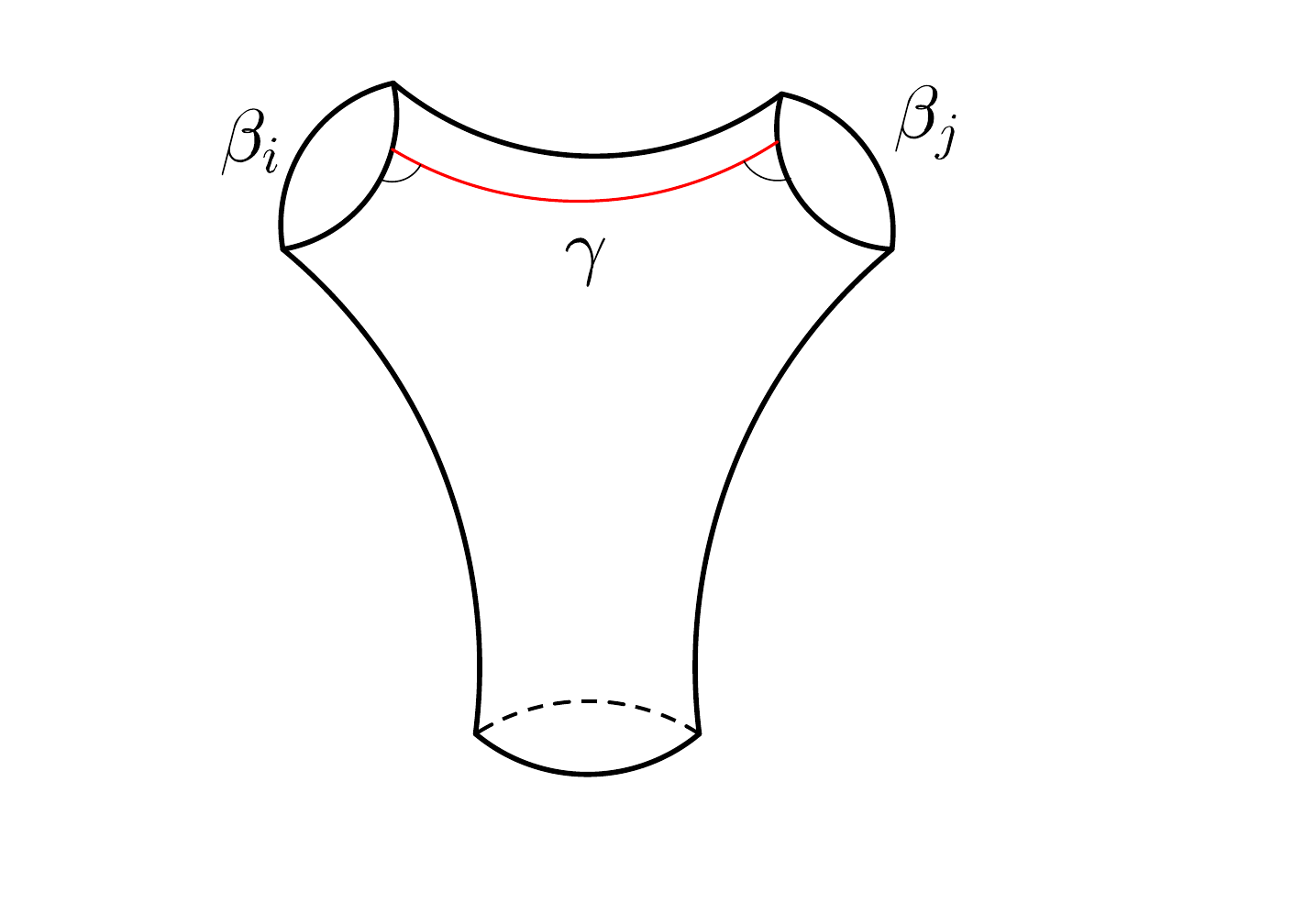}
  \label{fig:Y1}
  }
    \subfigure[]
  {
  \includegraphics[width=50mm]{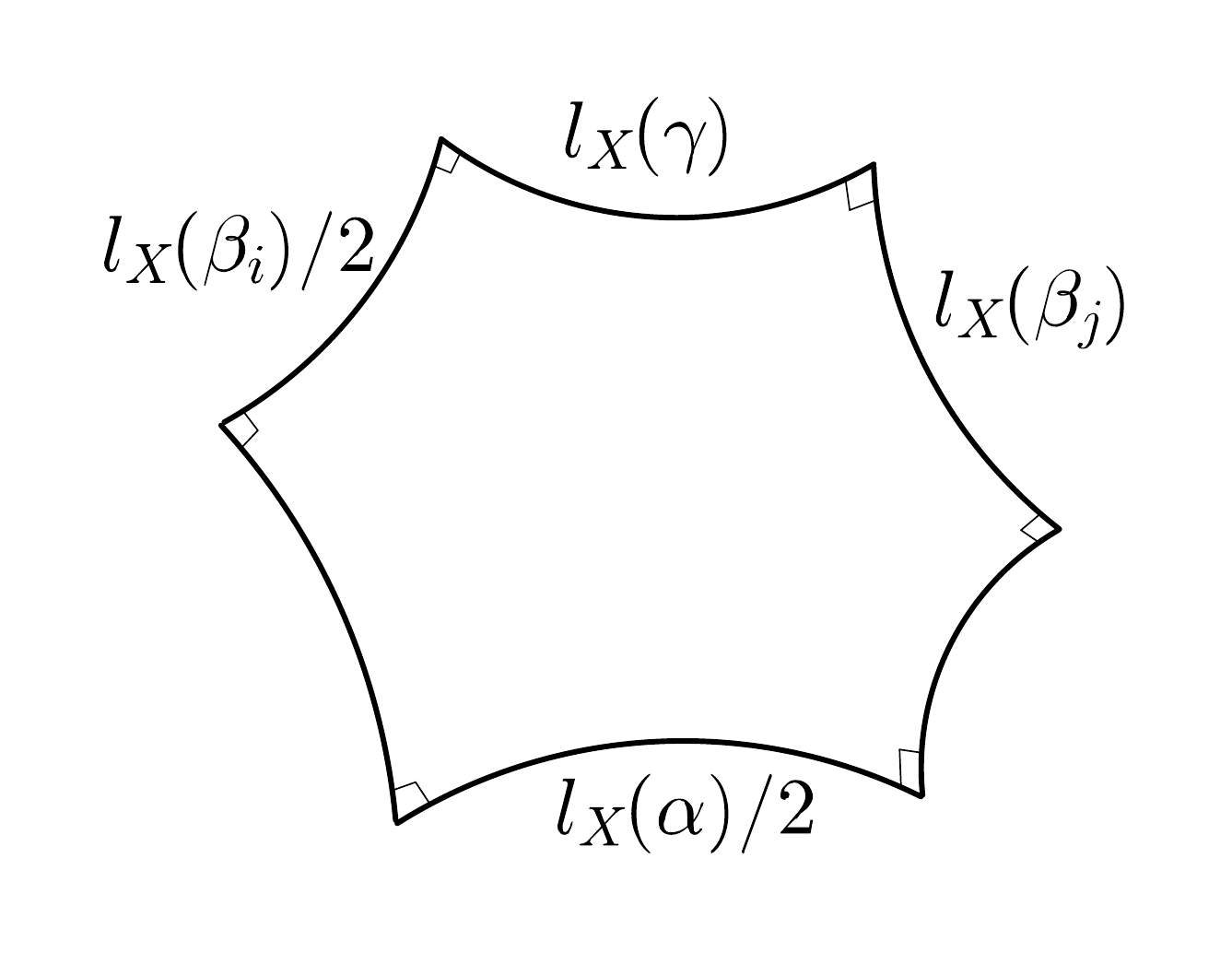}
  \label{fig:Hexagon1}
  }
  \caption{}
\end{figure}

  For any $X\in T_{g,n}(L)$ with  $l_X({\beta_i})=\lambda_i,l_X{(\beta_j)}=\lambda_j$,  we have
  \begin{equation}\label{eq:gamm}
  \cosh(l_{X}(\gamma))=\frac{\cosh{\frac{\lambda_i}{2}}\cosh{\frac{\lambda_j}{2}+
  \cosh{\frac{l_X(\alpha)}{2}}}}{\sinh{\frac{\lambda_i}{2}}\sinh{\frac{\lambda_j}{2}}}.
  \end{equation}
Let $\lambda=\max_{1\leq i,j \leq n}\{\sinh{\frac{\lambda_i}{2}}\sinh{\frac{\lambda_j}{2}},
\frac{\cosh{\frac{\lambda_i}{2}}\cosh{\frac{\lambda_j}{2}}
+1}{\sinh{\frac{\lambda_i}{2}}\sinh{\frac{\lambda_j}{2}}}\}.$

Therefore $$\frac{e^{\frac{l_X(\alpha)}{2}}}{2\lambda}\leq \cosh(l_{X}(\gamma))\leq \lambda e^{\frac{l_X(\alpha)}{2}} .$$

On the other hand,
 $$\frac{e^{l_{X}(\gamma)}}{2}\leq \cosh(l_{X}(\gamma))\leq e^{l_{X}(\gamma)},$$

So we get
$$-2\log2\lambda\leq l_X(\alpha)-{2l_X(\gamma)}\leq2\log2\lambda.$$

Let $K=\log2\lambda$.
\begin{itemize}
  \item If $l_X(\gamma)\geq K,$ then
$$1\leq\frac{l_X(\alpha)}{l_X(\gamma)}\leq3.$$
  \item If $l_X(\gamma)\leq K,$
set $r_0=\min_{1\leq i,j\leq n}\cosh^{-1}(\frac{\cosh{\frac{\lambda_i}{2}}\cosh{\frac{\lambda_j}{2}}}{\sinh{\frac{\lambda_i}{2}}\sinh{\frac{\lambda_j}{2}}}),$
it follows from (\ref{eq:gamm}) that
$$l_X(\gamma)\geq r_0,$$
and

$$l_X(\alpha)\leq x_0:=2\max_{1\leq i,j\leq n} \cosh^{-1}(K{\sinh{\frac{\lambda_i}{2}}\sinh{\frac{\lambda_j}{2}}}-{\cosh{\frac{\lambda_i}{2}}\cosh{\frac{\lambda_j}{2}}}).$$
\end{itemize}

 For any $\gamma \in \mathcal A'(S):=\{\gamma\in\mathcal A(S): l_{X_2}(\gamma)>l_{X_1}(\gamma)\}$,  we consider the following situations.
\begin{itemize}
\item If $l_{X_2}(\gamma)\geq l_{X_1}(\gamma)\geq K,$ then
$$\frac{l_{X_2}(\alpha)}{l_{X_1}(\alpha)}\geq \frac{l_{X_2}(\gamma)}{3l_{X_1}(\gamma)}.$$

\item If $l_{X_2}(\gamma)\geq K\geq l_{X_1}(\gamma),$ then  $$\frac{l_{X_2}(\alpha)}{l_{X_1}(\alpha)}\geq\frac{l_{X_2}(\gamma)}{l_{X_1}(\gamma)}
\frac{l_{X_1}(\gamma)}{x_0}\geq\frac{l_{X_2}(\gamma)}{l_{X_1}(\gamma)}\frac{r_0}{x_0}.$$

\item If $ K\geq l_{X_2}(\gamma)\geq l_{X_1}(\gamma),$ then
$$\frac{l_{X_2}(\alpha)}{l_{X_1}(\alpha)}\geq 1\geq\frac{l_{X_2}(\gamma)}{l_{X_1}(\gamma)}\frac{r_0}{K}.$$

\end{itemize}
 %As a result, $\log\frac{l_{X_2}(\alpha)}{l_{X_1}(\alpha)}\geq \log \frac{l_{X_2}(\gamma)}{l_{X_1}(\gamma)}-\log\frac{r_0}{r_1}.$

Let $C_1=\max\{\frac{1}{3},\frac{r_0}{x_0},\frac{r_o}{K}\}$,  we  have
$$\log\frac{l_{X_2}(\alpha)}{l_{X_1}(\alpha)}\geq C_1\frac{l_{X_2}(\gamma)}{l_{X_1}(\gamma)}.$$

\vskip 20pt
\textbf{Case(2)}: $\gamma$ connects  $\beta_i$ to itself for some boundary component $\beta_i$, see Figure \ref{fig:Y2}. Then there exist another two simple closed curves $\alpha,\delta \in \Cim$, such that $\beta_i,\alpha,\delta$ are the boundaries of the pants determined by $\gamma$.
  \begin{figure}
  \subfigure[]
  {
  \includegraphics[width=50mm]{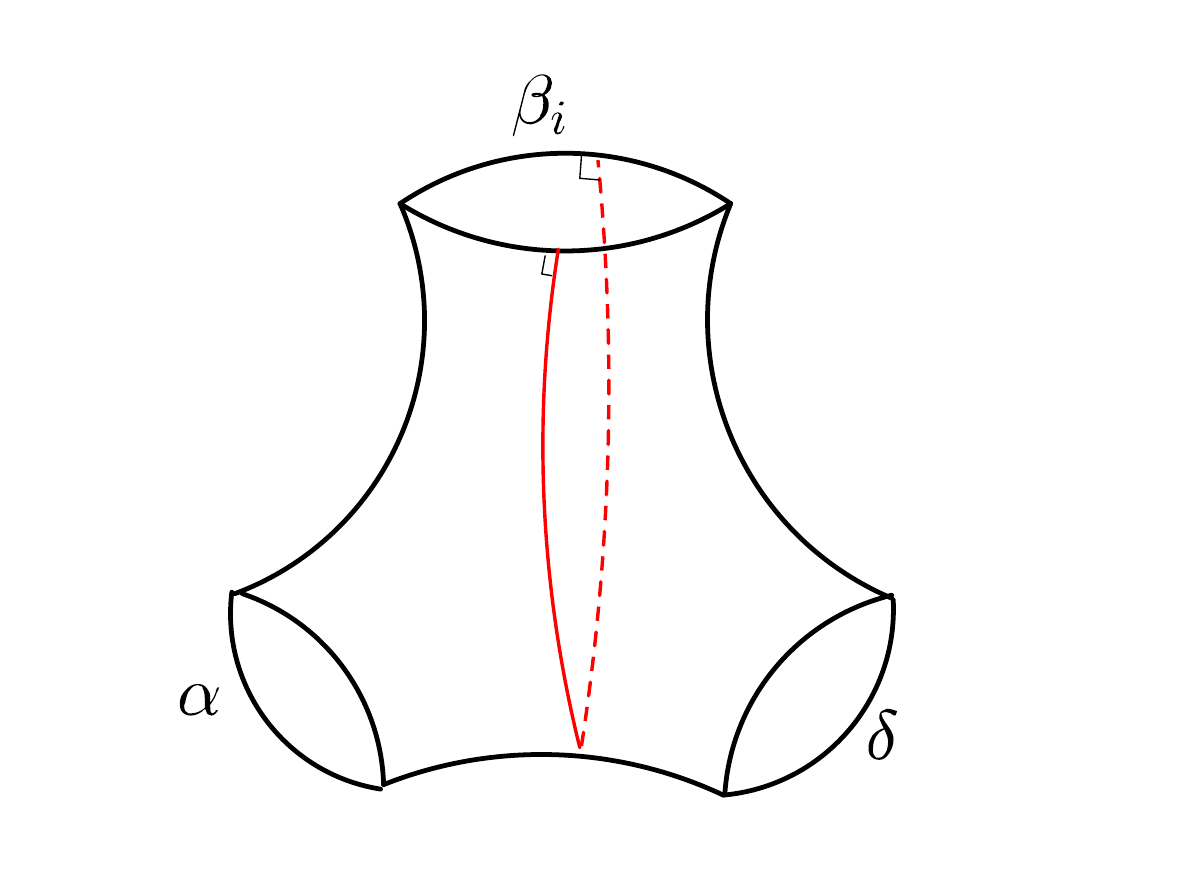}
  \label{fig:Y2}
  }
    \subfigure[]
  {
  \includegraphics[width=50mm]{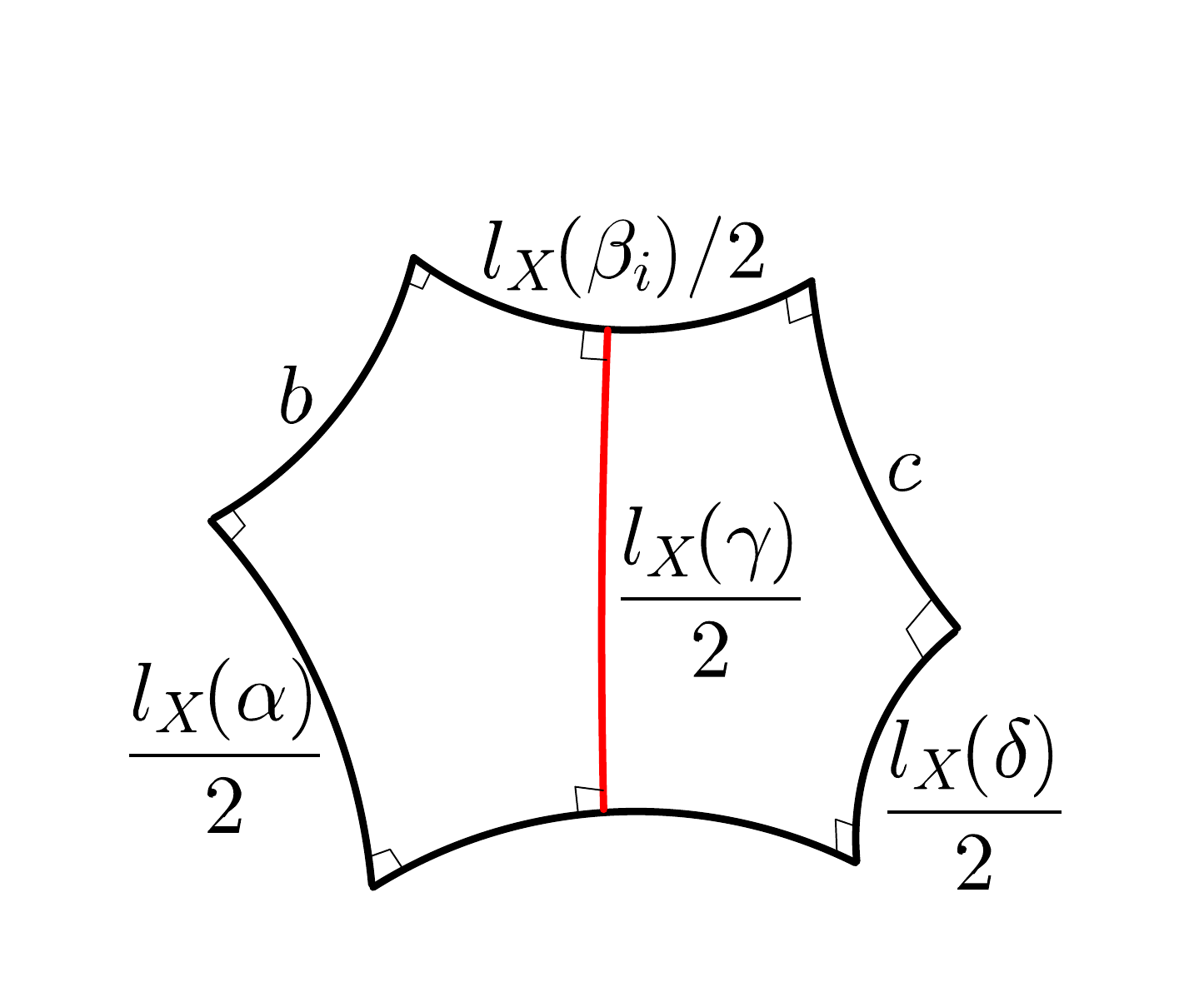}
  \label{fig:Hexagon2}
  }
  \caption{}
\end{figure}
Hence
\begin{eqnarray*}
  \cosh^2(\frac{l_X(\gamma)}{2})&=& \sinh^2\frac{l_X(\alpha)}{2}\sinh^2 b\\
 & =& \sinh^2\frac{l_X(\alpha)}{2}(\cosh^2b-1)\\
&=&\sinh^2\frac{l_X(\alpha)}{2}[(\frac{\cosh{\frac{l_X(\alpha)}{2}}\cosh{\frac{\lambda_i}{2}
+\cosh{\frac{l_X(\delta)}{2}}}}{\sinh{\frac{l_X(\alpha)}{2}}\sinh{\frac{\lambda_i}{2}}})^2-1]\\
&=&\frac{[\cosh\frac{l_X(\delta)}{2}+\cosh(\frac{l_X(\alpha)}{2}+\frac{\lambda_i}{2})]
[\cosh\frac{l_X(\delta)}{2}+\cosh(\frac{l_X(\alpha)}{2}-\frac{\lambda_i}{2})]}{\sinh^2\frac{\lambda_i}{2}}.
\end{eqnarray*}

On the other hand,
$$ e^{\frac{-\lambda_i}{2}}\cosh\frac{l_X(\alpha)}{2}\leq\cosh(\frac{l_X(\alpha)}{2}\pm\frac{\lambda_i}{2})\leq e^{\frac{\lambda_i}{2}}\cosh\frac{l_X(\alpha)}{2}.$$

From the discussions above, we get
$$ \frac{e^{\frac{-\lambda_i}{2}}}{\sinh\frac{\lambda_i}{2}}\leq \frac{\cosh\frac{(l_X(\gamma))}{2}}{(\cosh\frac{l_X(\delta)}{2}+
\cosh\frac{l_X(\alpha)}{2})}\leq\frac{e^{\frac{\lambda_i}{2}}}{\sinh\frac{\lambda_i}{2}}.$$

Then

$$ \log \frac{e^{\frac{-\lambda_i}{2}}}{2\sinh\frac{\lambda_i}{2}}\leq l_X(\gamma)-\max\{{l_X(\delta)},{l_X(\alpha)}\}\leq\log \frac{4e^{\frac{\lambda_i}{2}}}{\sinh\frac{\lambda_i}{2}}.$$
%Set $K'=\max\{2| \log \frac{e^{\frac{-l_i}{2}}}{2\sinh\frac{l_i}{2}}|,2\log \frac{4e^{\frac{l_i}{2}}}{\sinh\frac{l_i}{2}}\}.$
Note that
$$\cosh\frac{l_X(\alpha)}{2}\leq \sinh\frac{l_X(\beta_i)}{2}\sinh\frac{l_X(\gamma)}{2}.$$
Therefore
$$l_X(\gamma)\geq2\sinh^{-1}\frac{1}{\sinh\frac{l_X(\beta_i)}{2}}
=2\sinh^{-1}\frac{1}{\sinh\frac{\lambda_i}{2}}.$$

 The same as the discussion in Case 1, we have
%Therefor for any $X_1,X_2\in T_{g,n}(L)$ we have
$$\max\{\frac{l_{X_2}(\alpha)}{l_{X_1}(\alpha)},\frac{l_{X_2}(\delta)}{l_{X_1}(\delta)}\}\geq \frac{\max\{l_{X_2}(\delta),l_{X_2}(\alpha)\}}{\max\{l_{X_1}(\delta),l_{X_1}(\alpha)\}}
\geq C_2 \frac{l_{X_2}(\gamma)}{l_{X_1}(\gamma)}$$
for some constant $C_2$.

%In other words, we have
%$$\log\frac{l_{X_2}(\gamma)}{l_{X_1}(\gamma)}\leq \max\{\log\frac{l_{X_2}(\alpha)}{l_{X_1}(\alpha)},\log\frac{l_{X_2}(\delta)}{l_{X_1}(\delta)}\}+C_2, $$ where
%$C_2 $ is a constant depending only on L. Note that it may happens that  $\max\{\log\frac{l_{X_2}(\alpha)}{l_{X_1}(\alpha)},\log\frac{l_{X_2}(\delta)}{l_{X_1}(\delta)}\}\leq 0.$ And we know that $d_{TH}(X_1,X_2)>0$, so we could choose another simple closed curve $\delta'$, such that $\log\frac{l_{X_2}(\delta')}{l_{X_1}(\delta')}>0.$

 %Then we have
%$$\log\frac{l_{X_2}(\gamma)}{l_{X_1}(\gamma)}\leq \log\frac{l_{X_2}(\delta')}{l_{X_1}(\delta')}+C_2, $$

\vskip 20pt
Combing Case(1) and Case(2) we know that for any arc $\gamma\in\mathcal A'(S)$, we can find a simple closed curve $\alpha'\in \Sim$, such that
$$\frac{l_{X_2}(\alpha')}{l_{X_1}(\alpha')}\geq C\frac{l_{X_2}(\gamma)}{l_{X_1}(\gamma)},$$
where $C=\max\{C_1,C_2\}.$

Consequently,
 $$d_{A}(X_1,X_2)-\log C\leq d_{Th}(X_1,X_2)\leq d_{A}(X_1,X_2).$$

\end{proof}

\section{Proof of Theorem \ref{thm:FN}}
As we mentioned in the introduction, Liu-Papadopoulos-Su-Th\'eret (\cite{LPST}) described the Teichm\"uller metric on  the Teichm\"uller space of surfaces with boundary via the extremal lengths of essential arcs and the boundary components. Follow the idea in \cite{LPST}, we get the following approximation.

%In particular, for any $X,Y\in T_{g,1}(\epsilon)$,
%\[ d_T(X,Y)=\frac{1}{2}\sup_{[\gamma]\in %\Arc\cup\Bnd}\log\frac{\E_{X_2}([\gamma])}{\E_{X_1}([\gamma])}.\]
\begin{proposition}\label{prop:teich}
For small $\epsilon$,  $X, Y\in \T_{g,n}(\epsilon)$, we have
  $$|d_T(X,Y)-\frac{1}{2}\sup_{[\alpha]\in \Sim}|\log\frac{\E_{Y}(\alpha)}{\E_{X}(\alpha)}||\leq \log(n+2).$$
  %where the sup ranges over all the simple closed curves.
\end{proposition}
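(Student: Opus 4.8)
The plan is to reduce the Teichm\"uller metric to a supremum of extremal-length ratios over arcs and boundary components via \cite{LPST}, and then to trade these arcs and boundary components for interior simple closed curves, paying the factor $(n+2)^2$ that Lemma \ref{lem:extremal} provides. Following \cite{LPST} I would first record the description of the Teichm\"uller metric on a surface with boundary,
\[ 2\,d_T(X,Y)=\sup_{[\alpha]\in\Arc\cup\Bnd}\left|\log\frac{\E_Y(\alpha)}{\E_X(\alpha)}\right|, \]
obtained by passing to the Schottky double $X^d$ (where $\Arc$, $\Bnd$ and $\Sim$ all become simple closed curves) and applying Kerckhoff's formula \eqref{eq:Kerchoff}, using that the extremal Teichm\"uller map between symmetric structures is itself symmetric so that the maximizing foliation may be taken invariant under the reflection. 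One inequality of the Proposition is then free: if $K=e^{2d_T(X,Y)}$ is the dilatation of the extremal map, every measured lamination obeys $K^{-1}\le \E_Y/\E_X\le K$, so $\tfrac12\sup_{\Sim}\big|\log(\E_Y/\E_X)\big|\le d_T(X,Y)$. Hence it remains only to show the reverse bound $2d_T(X,Y)\le \sup_{\Sim}\big|\log(\E_Y/\E_X)\big|+2\log(n+2)$, and by the displayed formula it suffices to dominate $\big|\log(\E_Y(\alpha)/\E_X(\alpha))\big|$, for each arc and each boundary curve, by $\sup_{\Sim}\big|\log(\E_Y/\E_X)\big|+2\log(n+2)$.

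The boundary components are where the hypothesis that $\epsilon$ is small enters. Since $l_X(\beta_i)=l_Y(\beta_i)=\epsilon$, the remark after Maskit's Lemma \ref{Maskit} gives $\E_X(\beta_i),\E_Y(\beta_i)\in[\epsilon/\pi,\tfrac{\epsilon}{2}e^{\epsilon/2}]$, so that
\[ \left|\log\frac{\E_Y(\beta_i)}{\E_X(\beta_i)}\right|\le \log\!\left(\tfrac{\pi}{2}e^{\epsilon/2}\right)<\log 3\le \log(n+2) \]
once $\epsilon$ is small. Thus no boundary curve can exceed the required bound, and, more importantly, in any decomposition the boundary terms contribute negligibly compared with interior pieces.

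For an arc $\gamma$ I would use the pair of pants $P$ it determines in \S 3, whose remaining boundary consists of the interior simple closed curve $\alpha$ (Case (1)) or the pair $\alpha,\delta$ (Case (2)). Working on the double, $\gamma^d$, $\alpha\sqcup\bar\alpha$ and the $\beta$'s are all simple closed curves on $X^d$, to which Lemma \ref{lem:extremal} applies. Since $\alpha$ is freely homotopic to a curve running twice along $\gamma$ and once along each relevant boundary geodesic, the intersection-number (Gardiner--Masur) characterization of extremal length yields quasi-additivity relations of the form $\big|\,2\sqrt{\E(\gamma)}-\sqrt{\E(\alpha)}\,\big|\le\sum\sqrt{\E(\beta)}$ on both $X$ and $Y$. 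Because the boundary extremal lengths are uniformly small by the previous step while the arc term dominates, these relations let me compare $\E_Y(\alpha)/\E_X(\alpha)$ with $\E_Y(\gamma)/\E_X(\gamma)$; writing the interior contribution together with the (at most) $n$ boundary components as the $n+2$ summands in Lemma \ref{lem:extremal} produces the factor $(n+2)^2$, i.e. the additive constant $2\log(n+2)$ at the level of $2d_T$.

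The main obstacle is exactly this last comparison. Unlike the hyperbolic-length identities of \S 3, there is no exact trigonometric relation between the extremal length of an arc and that of its associated simple closed curve, so one is forced to argue through the quasi-additivity of $\sqrt{\E(\cdot)}$ and to control the error terms \emph{uniformly} over all arcs. The delicate regime is that of arcs plunging deep into the thin collars around the short boundary geodesics, where every relevant extremal length degenerates simultaneously; keeping the boundary contributions negligible there (again through the small-$\epsilon$ estimates of Lemma \ref{lem:extremal} and Maskit's Lemma \ref{Maskit}) while holding the number of pieces at $n+2$ is what pins down the stated constant, which, as the remark notes, is far from optimal.
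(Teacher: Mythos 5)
Your proposal diverges from the paper at the crucial step, and the point where it diverges is exactly the point you yourself flag as ``the main obstacle'': the comparison between the extremal length of an arc $\gamma$ and that of the simple closed curve(s) bounding the pair of pants it determines. The inequality you invoke, $\bigl|2\sqrt{\E(\gamma)}-\sqrt{\E(\alpha)}\bigr|\le\sum\sqrt{\E(\beta)}$, is asserted but not proved; it does not follow from Lemma \ref{lem:extremal} (which applies to a measured lamination written as a \emph{sum} of disjoint weighted curves, not to a curve homotopic to a concatenation of an arc and boundary pieces), nor from the Gardiner--Masur intersection-number characterization in any routine way. Even granting such a quasi-additivity, you would still need to convert it into a uniform multiplicative comparison of the ratios $\E_Y(\alpha)/\E_X(\alpha)$ and $\E_Y(\gamma)/\E_X(\gamma)$, with control in the degenerate regime where $\E(\gamma)$ is small; none of that is carried out, and it is not clear it can be with the stated constant. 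So as written the argument has a genuine gap.

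The paper avoids this issue entirely, and it is worth seeing how. Instead of starting from the Liu--Papadopoulos--Su--Th\'eret formula over $\Arc\cup\Bnd$ and then laboriously trading each arc for a curve, it goes to the double $X^d$, takes the \emph{extremal} horizontal lamination $h^d$ of the initial quadratic differential of the Teichm\"uller map, and uses that this differential is admissible: $\partial X$ is a horizontal leaf. Consequently $h^d=\sum_{i=1}^n a_i\beta_i+\mu$ with $\mu$ an \emph{interior} lamination --- arcs simply never appear in the maximizer. Approximating $\mu$ by weighted simple closed curves and applying Lemma \ref{lem:extremal} to these at most $n+1$ summands gives the factor $(n+1)^2$, and the boundary terms are then absorbed using Maskit's Lemma \ref{Maskit} (since $l_X(\beta_i)=l_Y(\beta_i)=\epsilon$ forces $\E_Y(\beta_i)/\E_X(\beta_i)\to 1$), yielding $\log(n+2)$. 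Your treatment of the boundary components via Maskit is fine and parallels the paper's, and your ``easy direction'' is correct; if you want to salvage your route through arcs, you would need to actually establish an extremal-length analogue of the hyperbolic trigonometric identities of Section 3, which is a substantial missing ingredient rather than a detail.
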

\begin{proof}
  Let  $X^d, Y^d$ be the double of $X$, $Y$ respectively. It follows from  (\ref{eq:Kerchoff})  that
  \begin{eqnarray*}
      d_T(X,Y)&=&d_T(X^d,Y^d)\\
      &=&\frac{1}{2}\sup_{\alpha\in \mathcal{ML}(X^d)}|\log\frac{\E_{Y^d}(\alpha)}{\E_{X^d}(\alpha)}|\\
      &\geq&\frac{1}{2}\sup_{\alpha\in \Sim\cup \Bnd}|\log\frac{\E_{Y}(\alpha)}{\E_{X}(\alpha)}|.
  \end{eqnarray*}

  It remains to prove the other direction.
  Let $q^d$ be the initial quadratic differential  associated to the  Teichm\"uller map between $X^d$ and $Y^d$. Let $h^d$  be the horizontal measured lamination of $q^d$. Then
   \[d_T(X,Y)=d_T(X^d,Y^d)=\frac{1}{2}|\log\frac{\E_{Y^d}(h^d) }{\E_{X^d}(h^d)}|.\]

  % Without loss of generality, we assume that $v^d$ does not transversally intersect the boundary component, i.e. $i(v^d,\beta)=0$, where $\beta$ represents the boundary component of $X$. Otherwise we consider $h^d$ instead. Denote by $v$ the restriction of $v^d$ on $X$.
   Recall that  $\partial X$ is a $q^d$- horizontal line (see \S\ref{ssec:double}), $h^d$ can be decomposed as $h^d=\Sigma_{i=1}^n a_i\beta_i+\mu$, where $a_i\geq0$, $\beta_i$ is a measured lamination represented by a  boundary component of $X$, and $\mu$ is an interior measured geodesic lamination, i.e.  $a_i\beta_i\in \mathbb R_{\geq0}\times\Bnd$, $\mu\in\mathcal{ML}_0(\mathcal S)$. Since $\mathbb R^+\times\Sim$  is dense in  $\mathcal{ML}_0(\mathcal S)$, there exists $\{c_i\delta_i\}_{i=1}^\infty\subset\mathbb R^+\times\Sim$ such that $c_i\delta_i\to\mu$ as $i\to\infty$. Therefore,
   \[d_T(X,Y)=\lim_{j\to\infty}\frac{1}{2}|\log\frac{\E_{Y^d}(\Sigma_{i=1}^n a_i\beta_i+c_j\delta_j) }{\E_{X^d}(\Sigma_{i=1}^n a_i\beta_i+c_j\delta_j)}|.\]
 %  If $v$ does not contain the boundary component $\beta$, there exist a sequence of weighted essential simple closed curves $a_i\alpha_i\in \Sim$ converging to $v$ in $\PML$. Therefore,
%   \[ d_T(X,Y)=\frac{1}{2}\lim_{i\to\infty}\log\frac{\E_{Y}(\alpha_i) }{\E_{X}(\alpha_i)}
  % \leq   \frac{1}{2}\sup_{\alpha\in \Sim}|\log\frac{\E_{X_2}(\alpha)}{\E_{X_1}(\alpha)}|.\]

  % If $v$ contains $\beta$, assume $v=a\mu+\beta$ for some nonnegative constant $a\geq 0$ and some measured lamination $\mu$ which does not transversally intersect $\beta$. Then there exists a sequence of  essential simple closed curves $\alpha_i\in \Sim$ and a sequence of $a_i>0$ such that $a_i\alpha_i$ converges to $a\mu$ in $\PML$. As a consequence,
  % \[ d_T(X,Y)=\frac{1}{2}\lim_{i\to\infty}\log\frac{\E_{Y}(a_i\alpha_i+\beta) }{\E_{X}(a_i\alpha_i+\beta)}.\]
%   Let $\delta_i=a_i\alpha_i $ if $\E_{Y}(a_i\alpha_i)>\E_{Y}{(\beta)}$, otherwise let $\delta_i=\beta$.
 For each  $j\geq1$, let $\mu_j\in\{a_1\beta_1,a_2\beta_2,\cdots, a_n\beta_n, c_j\delta_j\}$ such that $$\E_{Y^d}(\mu_j)=\max_{1\leq k\leq n}\{\E_{Y^d}(a_k\beta_k),\E_{Y^d}(c_j\delta_j)\}.$$
 It follows from Lemma \ref{lem:extremal} that
    \begin{eqnarray*}
      d_T(X,Y)&\leq&\frac{1}{2}\lim_{j\to\infty}|\log\frac{(n+1)^2\E_{Y}(\mu_j) }{\E_{X}(\mu_j)}|\\
      &\leq &  \frac{1}{2}\sup_{\alpha\in \Sim}|\log\frac{\E_{Y}(\alpha)}{\E_{X}(\alpha)}|+\frac{1}{2}\sup_{\beta\in \Bnd}|\log  \frac{\E_{Y}(\beta) }{\E_{X}(\beta)}|+\log (n+1)\\
      &\leq &  \frac{1}{2}\sup_{\alpha\in \Sim}|\log\frac{\E_{Y}(\alpha)}{\E_{X}(\alpha)}|+\log (n+2)
    \end{eqnarray*}
    where we use the first result of Lemma \ref{Maskit}  in the last inequality.
\end{proof}
%\remark The condition that $S$ has only one boundary component is used to make sure that  either the vertical measured lamination or the horizontal measured lamination of the initial quadratic differential does not transversally intersect the boundary component. This might not be true in general.

Next, we estimate the extremal lengths.
Let $X\in T_{g,n}(0)$,  $\{p_1,\cdots,p_n\}$  be the punctures of $X$. It is well known that every puncture  has a cusp neighbourhood consisting of horocycles of length less than $1$ (see \cite{Bu} for example). Let $\{D^\epsilon_1,\cdots,D^\epsilon_n\}$ be  the corresponding cuspidal neighborhoods with boundary lengths $\epsilon$,  $\textup{Cusp}_\epsilon(X):=\cup_{1\leq i\leq n}D^\epsilon_i$ and
$X_\epsilon:=X\backslash\textup{Cusp}_\epsilon(X)$. The following proposition is key to prove Theorem \ref{thm:FN}.
\begin{proposition}\label{prop:ext-est-1}
  For small $\epsilon$, there is a constant $C_\epsilon$ such that for any $\alpha\in \Sim$ and any $X\in\T_{g,n}(0)$,
  $$ {1}\leq \frac{\E_{X_\epsilon}(\alpha)}{\E_{X}(\alpha)}\leq {C_\epsilon}.$$
   Moreover, $C_\epsilon\to1$ as $\epsilon\to0$.
\end{proposition}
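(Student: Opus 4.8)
The lower bound $\E_{X}(\alpha)\le \E_{X_\epsilon}(\alpha)$ is the easy direction and follows from the monotonicity of extremal length under conformal inclusion. Indeed $X_\epsilon$ is a subsurface of $X$ carrying the restricted conformal structure, and every representative of $\alpha$ may be taken to lie in $X_\epsilon$. Given any conformal metric $\rho$ on $X$, its restriction to $X_\epsilon$ has no larger area, while the infimal $\rho$-length of $\alpha$ computed inside $X_\epsilon$ (an infimum over fewer competitors) is at least the one computed in $X$; comparing the ratios $l^2_\rho(\alpha)/Area(\rho)$ and passing to the supremum over $\rho$ gives $\E_{X_\epsilon}(\alpha)\ge \E_X(\alpha)$.

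The substance is the upper bound, for which the plan is to start from the extremal metric on $X_\epsilon$ and extend it across the cusps to a competitor metric on $X$, losing as little length and gaining as little area as possible. Let $\sigma$ be the extremal metric of $\alpha$ on $X_\epsilon$, normalised so that $Area(\sigma)=1$, whence $\E_{X_\epsilon}(\alpha)=l^2_\sigma(\alpha)$. I would extend $\sigma$ to a metric $\tilde\sigma$ on $X$ by filling each (punctured) disk removed from $X$ with a model metric isometric to a flat cylinder of circumference equal to the boundary length, of height one quarter of that circumference, capped off by a flat disk (the puncture being placed on the cap and contributing no area). This model has two features I will exploit: its area is a fixed multiple of the square of its boundary length, and -- because in a flat cylinder the shortest path between two boundary points runs along the boundary circle, and the height is large enough that any path reaching the cap exceeds half the circumference -- no arc crossing it between two boundary points is shorter than the shorter boundary arc joining them. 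The second feature is the crucial \emph{no-shortcut} property: every representative of $\alpha$ in $X$ can be pushed out of the filled disks into $X_\epsilon$ without increasing its $\tilde\sigma$-length, so that $l_{\sigma,X_\epsilon}(\alpha)\le l_{\tilde\sigma,X}(\alpha)$. Here one uses that $\alpha$ is essential, so its length-minimising representatives do not wind around the punctures; winding excursions only lengthen the curve and may be discarded. Consequently
\[ \E_X(\alpha)\ge \frac{l^2_{\tilde\sigma,X}(\alpha)}{Area(\tilde\sigma)}\ge \frac{\E_{X_\epsilon}(\alpha)}{1+\delta(\epsilon)},\qquad \delta(\epsilon):=Area(\tilde\sigma)-1, \]
and it remains to show that the total extra area $\delta(\epsilon)$ tends to $0$ uniformly in $X$ and in $\alpha$.

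The first feature of the model shows that $\delta(\epsilon)$ is controlled by the sum of the squares of the $\sigma$-lengths of the curves along which we cut. Cutting along the horocycles $\partial D^\epsilon_i$ themselves is not good enough, since these can have definite $\sigma$-length. The key point -- and the step I expect to be the main obstacle -- is to produce, for each cusp, a curve of small $\sigma$-length along which to cut. This is where the universal conformal geometry of cusps enters: there is a universal $\epsilon_0$ such that every essential simple closed geodesic on every $X\in\T_{g,n}(0)$ avoids $\textup{Cusp}_{\epsilon_0}(X)$, and for $\epsilon<\epsilon_0$ the region $D^{\epsilon_0}_i\setminus D^\epsilon_i$ is an embedded annulus whose modulus $M(\epsilon)$ depends only on $\epsilon,\epsilon_0$ and tends to $\infty$ as $\epsilon\to0$. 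Applying the defining inequality for extremal length to the area-one metric $\sigma$ on this collar produces a curve $\gamma_i$, homotopic to $\partial D^\epsilon_i$, with $l_\sigma(\gamma_i)\le 1/\sqrt{M(\epsilon)}$.

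Cutting along these short curves $\gamma_i$ and filling the enclosed punctured disks with the model metric then gives $\delta(\epsilon)\le c\,n/M(\epsilon)$, which tends to $0$ uniformly in $X$ and $\alpha$; taking $C_\epsilon=1+c\,n/M(\epsilon)$ yields both the bound and $C_\epsilon\to1$. The one technical point to verify carefully is the no-shortcut/pushing-out step in the presence of the cut curves $\gamma_i$, which are short but not round: since extremal-length competitor metrics need only be measurable, this causes no difficulty, as lengths are simply added across the (possibly discontinuous) interface along $\gamma_i$, and the cylinder circumference in the model is matched to $l_\sigma(\gamma_i)$.
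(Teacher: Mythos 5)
Your lower bound and the overall strategy for the upper bound (extend the extremal metric of $X_\epsilon$ across the cusps with small added area, then push competitors back into $X_\epsilon$) are reasonable, but the argument has a genuine gap at exactly the step you flag as ``the one technical point'': the no-shortcut property. The difficulty is not measurability. Your model metric $\tau$ on the filled disk $P_i$ controls the $\tau$-length of a crossing arc $e$ by the \emph{$\tau$-length} of the boundary arc $f\subset\gamma_i$ it is pushed onto, whereas what you need is a bound by the \emph{$\sigma$-length} of $f$, since outside $P_i$ the pushed-out competitor is measured in $\sigma$. These two length elements on $\gamma_i$ have no reason to be comparable: $\gamma_i$ was chosen only so that its \emph{total} $\sigma$-length is at most $1/\sqrt{M(\epsilon)}$, and the extremal metric $\sigma=|\phi_\epsilon|^{1/2}|dz|$ can concentrate essentially all of that length on a conformally tiny sub-arc of $\gamma_i$. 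In that situation no finite-area conformal metric on $P_i$ can have the property you want: the family of arcs in a half-disk separating a boundary point from the rest of the boundary has extremal length zero, so for \emph{any} conformal metric of finite area there are crossing arcs $e$ of arbitrarily small $\tau$-length whose shadows $f$ carry a definite amount of $\sigma$-length. What survives of your argument is only the additive estimate $l_{\tilde\sigma}(c)\geq l_{\sigma,X_\epsilon}(\alpha)-\sum_i l_\sigma(\gamma_i)$, and this additive loss of order $n/\sqrt{M(\epsilon)}\sim n\sqrt{\epsilon}$ is \emph{not} negligible compared with $\sqrt{\E_{X_\epsilon}(\alpha)}$ when $\alpha$ is extremally short. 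So your proof does not give a multiplicative constant $C_\epsilon$ uniform over all $\alpha\in\Sim$.

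This is precisely why the paper splits into two cases according to whether $\E_{X_\epsilon}(\alpha)\geq\sqrt\epsilon$. In the regime $\E_{X_\epsilon}(\alpha)\geq\sqrt\epsilon$ the paper does essentially what your (corrected, additive) argument does, and in an even simpler form: it extends $|\phi_\epsilon|$ by \emph{zero} rather than by a model cylinder (so no area is added at all), pushes excursions onto $\partial D_i^\epsilon$, and absorbs the additive loss $n\sqrt{2\epsilon\|\phi_\epsilon\|}$ using the lower bound on $\E_{X_\epsilon}(\alpha)$. The missing ingredient in your proposal is the complementary regime $\E_{X_\epsilon}(\alpha)\leq\sqrt\epsilon$, which the paper handles by a different mechanism: it works on $X$ with the Jenkins--Strebel differential $\phi$ of $\alpha$, bounds $L_{|\phi|}(\partial D_i^\epsilon)$ by $O(\sqrt{\|\phi\|})$ via subharmonicity of $|\psi|$ near the pole, and compares the heights of the characteristic cylinder $A_\alpha$ and its truncation $A_{\epsilon,\alpha}$; here the smallness of $\E_X(\alpha)$ makes the cylinder tall, so truncating by a bounded amount changes the modulus by a factor $1+O(\epsilon^{1/4})$. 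You need an argument of this second kind (or some other input exploiting the smallness of $\E_{X_\epsilon}(\alpha)$) to close the gap.
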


\begin{proof}

  Before we prove the proposition, we make some conventions.  For any simple closed curve $\alpha$, denote by $L_\rho(\alpha)$ the length of $\alpha$ under the metric $\rho$ and  $l_\rho(\alpha)$  the length of the geodesic representative of $\alpha$ under the metric $\rho$.

  Since $X_\epsilon\subset X$, it follows from the definition that ${\E_{X_\epsilon}(\alpha)}\geq{\E_{X}(\alpha)}.$

  Let $\{p_1,\cdots,p_n\}$  be the punctures of $X$ and $\{D^\epsilon_1,\cdots,D^\epsilon_n\}$ be  the corresponding cuspidal neighborhoods with boundary lengths $\epsilon$. Recall that each puncture $p_i$ has a cuspidal neighbourhood $D^1_i$ with boundary length $1$ such that $D^1_i\cap D^1_j=\emptyset$ for $i\neq j$. Let $G_i$ be the infinite cyclic group generated by a simple closed curve which is homotopic to $p_i$. Let $D^*=\{w:0<|w|<1\}$ be the punctured unit disc equipped with the hyperbolic metric $\rho=|dw|/(|w|\log|w|^{-1})$. Let $\pi_i:D^*\mapsto X$ be a covering map such that   the fundamental group of $D^*$  corresponds to $G_i$ and that  $X$ coincides with the push-forward of $\rho$. In this setting, $D^\epsilon_i$ is conformal to the punctured disc $D_{R(\epsilon)}^*=\{w:0<|w|<{R(\epsilon)}\}$  where $R(\epsilon)=\exp(-2\pi/\epsilon)$. It is clear that $R(\epsilon)<1/2R(1)$ for small $\epsilon$.

  The remaining of the proof will be split into two cases.

  \textbf{Case 1}: $\E_{X_\epsilon}(\alpha)\leq \sqrt \epsilon.$ It is clear that $\E_{X}(\alpha)\leq\E_{X_\epsilon}(\alpha)\leq \sqrt \epsilon.$
  Let $\phi$ be the quadratic differential on $X$ whose horizontal measured foliation $h_\phi$ is equivalent to $\alpha$.     Denote by $|\phi|$ the induced flat metric on $X$, then $|\phi|$ is the extremal metric of $\alpha$, i.e.
 \begin{equation}\label{eq:ext}
  \E_X(\alpha)=\frac{l^2_{|\phi|}(\alpha)}{||\phi||},
  \end{equation}
  where $||\phi||=\int_X |\phi|$ and $l_{|\phi|}(\alpha)$ is the length of the geodesic homotopic to $\alpha$ under $|\phi|$.

  To estimate $\E_{X_\epsilon}(\alpha)$, we need to estimate the length of $\partial D^\epsilon_i$, denoted by $L_{|\phi|}(\partial D_i^\epsilon)$, under the flat metric $|\phi|$.  Recall that $\phi$ has a simple pole at $p_i$, it has the following expression in $D^*_{R(1)}$,
  \[\phi(w)dw^2=(\psi(w)/w)dw^2,\]
  where $\psi$ is holomorphic. Note that $|\psi(w)|$ is subharmonic and $\int_0^{2\pi}|\psi(re^i\theta)|d\theta$ is an increasing function of $r$.
  For simplicity, set $R_1=R(\epsilon)$ and $R_2=R(1)=\exp(-2\pi)$, then
% \begin{center}
% \begin{math}
  \begin{eqnarray}%{rclr}
   L_{|\phi|}(\partial D^\epsilon_i)
    &=&\int_0^{2\pi}\sqrt{\frac{|\psi(R_1e^i\theta)|}{R_1}}R_1d\theta \nonumber\\
    &\leq&(2\pi\int_0^{2\pi}|\psi(R_1e^i\theta)|d\theta)^{1/2}\nonumber\\
    &\leq& (\frac{2\pi}{R_2-R_1}\int_{R_1}^{R_2}dr\int_0^{2\pi}|\psi(re^i\theta)|d\theta)^{1/2}\nonumber\\
    &\leq&  (\frac{2\pi}{R_2-R_1}\int_{D^*_{R_2}}|\phi|rdrd\theta)^{1/2}\nonumber\\
    &\leq&(\frac{4\pi}{R_2}||\phi||)^{1/2}.\label{eq:length1}
 \end{eqnarray}
% \end{math}
% \end{center}
\begin{figure}
  \includegraphics[width=100mm]{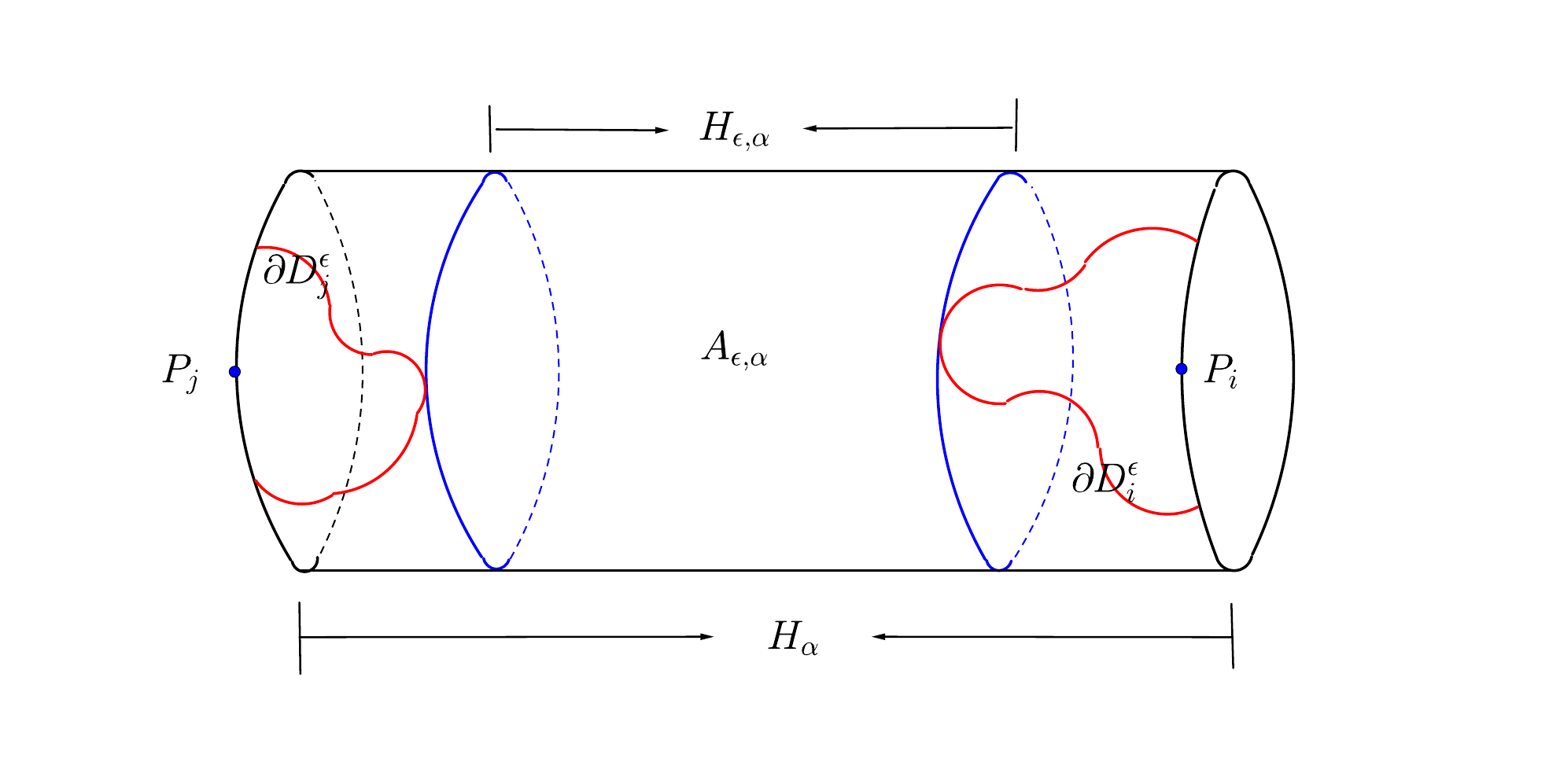}
  \caption{}
   \label{fig:cylinder}
\end{figure}
  Cutting $X$ along the critical leaves of $h_\phi$, we get a cylinder $A_\alpha$.  Let $A_{\epsilon,\alpha}\subset A_\alpha$ be the maximal cylinder whose core curve is homotopic to $\alpha$ and which is contained in $X_\epsilon$ (see Figure \ref{fig:cylinder}).  Denote by $H_\alpha$ and  $H_{\epsilon,\alpha}$ the heights of $A_\alpha$ and $A_{\epsilon,\alpha}$ respectively. Then $H_\alpha=\sqrt{(\E_X(\alpha))^{-1}||\phi||}$
  and
  $$H_{\epsilon,\alpha}\geq H_\alpha-2\max_{1\leq i\leq n}{L_{|\phi|}(\partial D^\epsilon_i)}
  \geq H_\alpha-(\frac{16\pi}{R_2}||\phi||)^{1/2}.$$
  Hence
   \[ \frac{\E_{X_\epsilon}(\alpha)}{\E_{X}(\alpha)}
    \leq\frac{\E_{A_{\epsilon,\alpha}}(\alpha)}{\E_{A_\alpha}(\alpha)}\\
    =\frac{H_\alpha}{H_{\epsilon,\alpha}}\\
    %&\leq&({1-(\frac{16\pi||\phi||\E_X(\alpha)}{||\phi||R_2})^{1/2}})^{-1}\\
    \leq 1+\sqrt{32\pi e^{2\pi}}\epsilon^{1/4}.  \]

  \textbf{Case 2}: $\E_{X_\epsilon}(\alpha)\geq \sqrt \epsilon.$   Let $\phi_\epsilon$ be the quadratic differential on $X_\epsilon$ whose horizontal measured foliation $h_{\phi_\epsilon}$ is equivalent to $\alpha$.    Denote by $|\phi_\epsilon|$ the induced flat metric on $X_\epsilon$, then $|\phi_\epsilon|$ is the extremal metric of $\alpha$. Note that
  $\partial X_\epsilon=\cup_{i=1}^n\partial D^\epsilon_i$ is contained in the critical leaves of $h_{\phi_\epsilon}$, so $\partial D^\epsilon_i$ is a geodesic under the metric $|\phi_\epsilon|$.
  Hence
  $$L_{|\phi_\epsilon|}(\partial D^\epsilon_i)\leq \sqrt{\E_{X_\epsilon}(\partial D^\epsilon_i)||\phi_\epsilon||} . $$
  Let $A_i=D^1_i\backslash D^\epsilon_i$, then $A_i$ is conformal to the annulus
   $D^*_{R(1)}\backslash D^*_{R(\epsilon)}=\{w:R(\epsilon)<|w|<R(1)\}$. Therefore
   $$\E_{X_\epsilon}(\partial D^\epsilon_i)\leq\E_{A_i}(\partial D^\epsilon_i)=2\pi(\log\frac{R(1)}{R(\epsilon)}) ^{-1}=(1/\epsilon-1)^{-1}\leq 2\epsilon.$$
 \begin{figure}
   \includegraphics[width=100mm]{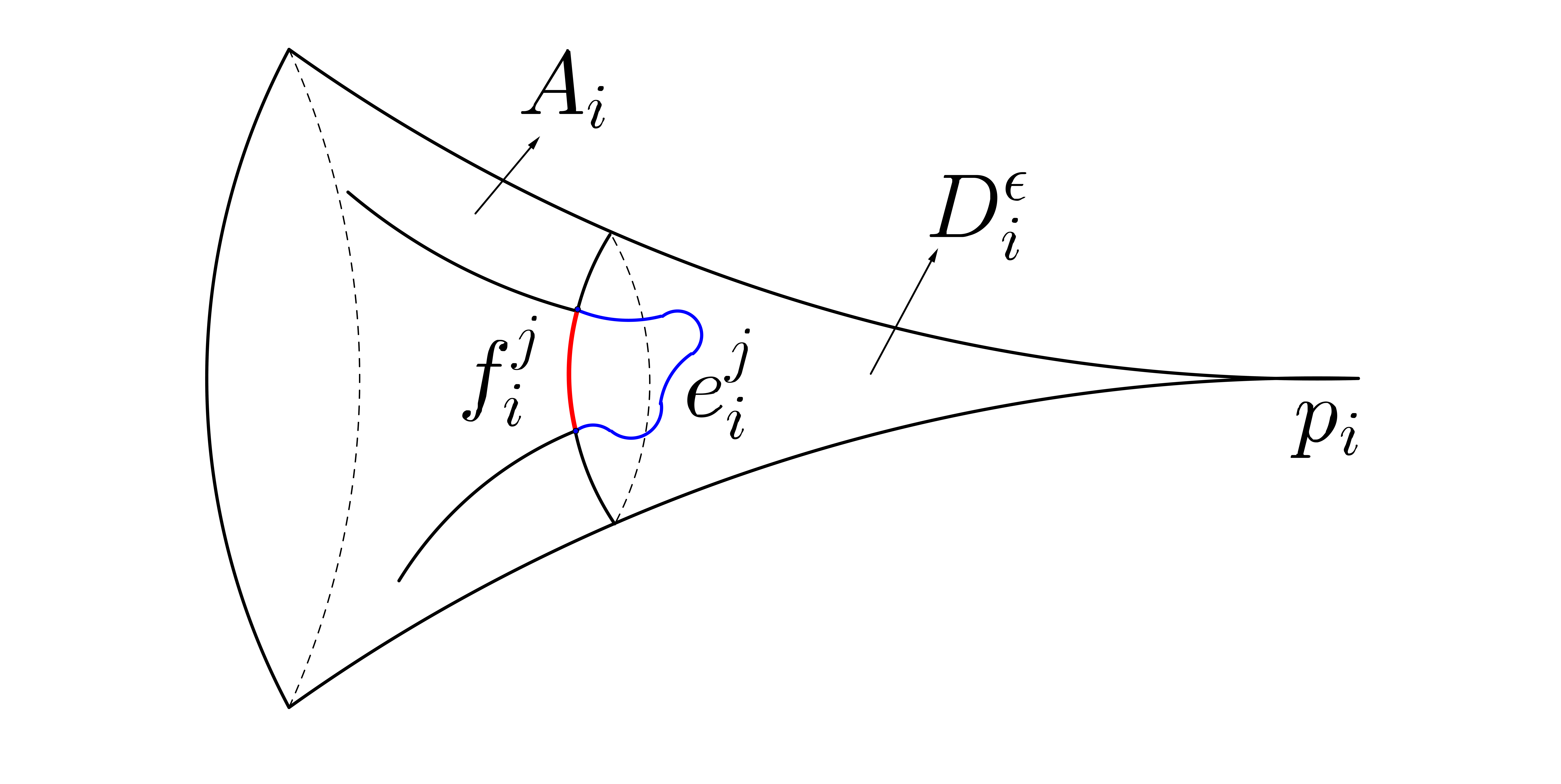}
   \caption{}
    \label{fig:cusp}
  \end{figure}
   On the other hand, $|\phi_\epsilon|$ defines a conformal metric $\rho_\epsilon$ on $X$, which coincides with $|\phi_\epsilon|$ on $X_\epsilon$ and vanishes elsewhere. For any simple closed curve $\alpha$,  set $e_i:=\alpha\cap D^\epsilon_i$. Let $e^j_{i}$ be a component of $e_i$ (see Figure \ref{fig:cusp}). $D^\epsilon_i \backslash e^j_i$ has two components, one is homeomorphic to a disc , denoted by $E^j_i$ and the other is homeomorphic to a punctured disc. Let $f_i^j=\partial E_i^j\backslash e_i^j$. It follows that $f_i^j\subset \partial D_i^\epsilon$ and $f_i^j\cap f_i^k=\emptyset$ if $j\neq k$. We construct a new simple closed curve $\alpha'$ from $\alpha$ via replacing $e_i^j$ by $f_i^j$. It is clear that $\alpha'$ is homotopic to $\alpha$ and that $\alpha'$ is contained in $X_\epsilon$. Then
   \begin{eqnarray*}
     L_{\rho_\epsilon}(\alpha)&\geq &L_{|\phi_\epsilon|}(\alpha\backslash (\cup e_i^j))\\
     &=&L_{|\phi_\epsilon|}(\alpha')-L_{|\phi_\epsilon|} (\cup f_i^j)\\
      &\geq & L_{|\phi_\epsilon|}(\alpha')-\Sigma_{i=1}^{n}L_{|\phi_\epsilon|}(\partial D^\epsilon_i)\\
      &\geq &\sqrt{\E_{X_\epsilon}(\alpha)||\phi_\epsilon||}-n\sqrt{2\epsilon||\phi_\epsilon||}\\
      &\geq& {(1-n \sqrt{2}\epsilon ^{1/4})}\sqrt{\E_{X_\epsilon}(\alpha)||\phi_\epsilon||},
    \end{eqnarray*}
    where we use $\E_{X_\epsilon}(\alpha)\geq \sqrt \epsilon$ in the last inequality.
   As a consequence,
   $$ \E_X(\alpha)\geq \frac{(l_{\rho_\epsilon}(\alpha))^2}{Area(\rho_\epsilon)}
   =\frac{( \inf_{\alpha'\in \Sim} L_{\rho_\epsilon}(\alpha))^2}{Area(\rho_\epsilon)}
   \geq (1-2n\pi \sqrt{2}\epsilon ^{1/4})^2\E_{X_\epsilon}(\alpha),$$
   where  $\alpha'\sim$ ranges over every simple closed curve homotopic to $\alpha$.

   Let $C_\epsilon=\max\{(1-2n\pi \sqrt{2}\epsilon ^{1/4})^{-2},1+\sqrt{32\pi e^{2\pi}}\epsilon^{1/4}\}$. Combining Case 1 and Case 2, we get
   $$\frac{\E_{X_\epsilon}(\alpha)}{\E_X(\alpha)}\leq C_\epsilon $$
   for any $\alpha\in\Co$. Moreover, $C_\epsilon\to1$ as $\epsilon\to 0$.
\end{proof}

\begin{corollary}
    For small $\epsilon$, there is a constant $C_\epsilon$ such that for any interior measured laminations $\mu$  and any $X\in\T_{g,n}(0)$,
  $$ {1}\leq \frac{\E_{X_\epsilon}(\mu)}{\E_{X}(\mu)}\leq {C_\epsilon}.$$
   Moreover, $C_\epsilon\to1$ as $\epsilon\to0$.
\end{corollary}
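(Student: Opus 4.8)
The plan is to upgrade Proposition \ref{prop:ext-est-1} from simple closed curves to interior measured laminations by a density-and-continuity argument, exploiting that the ratio in question is scale invariant and that $\mathbb R_+\times\Sim$ is dense in $\mathcal{ML}_0(\mathcal S)$.

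First I would record that extremal length is homogeneous of degree two, i.e. $\E_X(t\alpha)=t^2\E_X(\alpha)$ and $\E_{X_\epsilon}(t\alpha)=t^2\E_{X_\epsilon}(\alpha)$ for every $t>0$ and every $\alpha\in\Sim$. Consequently the ratio appearing in Proposition \ref{prop:ext-est-1} is unchanged under scaling, so the two-sided bound $1\leq \E_{X_\epsilon}(\cdot)/\E_X(\cdot)\leq C_\epsilon$ holds verbatim for every weighted simple closed curve $t\alpha\in\mathbb R_+\times\Sim$, with exactly the same constant $C_\epsilon$, which in particular does not depend on the curve.

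Next, fix an interior measured lamination $\mu\in\mathcal{ML}_0(\mathcal S)$. Since $\mathbb R_+\times\Sim$ is dense in $\mathcal{ML}_0(\mathcal S)$, I would choose a sequence $c_j\delta_j\in\mathbb R_+\times\Sim$ with $c_j\delta_j\to\mu$. Recalling that on a surface with boundary the extremal length of a measured lamination is defined via the double, and that Kerckhoff's extension of extremal length to $\mathcal{ML}$ is continuous in the topology on measured laminations, I would apply this continuity on the doubles $X^d$ and $(X_\epsilon)^d$ to obtain
\[\E_X(c_j\delta_j)\to\E_X(\mu)\quad\text{and}\quad \E_{X_\epsilon}(c_j\delta_j)\to\E_{X_\epsilon}(\mu)\]
as $j\to\infty$. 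Since the previous paragraph gives $1\leq \E_{X_\epsilon}(c_j\delta_j)/\E_X(c_j\delta_j)\leq C_\epsilon$ for every $j$ with a constant independent of $j$, passing to the limit yields $1\leq \E_{X_\epsilon}(\mu)/\E_X(\mu)\leq C_\epsilon$, as required. The assertion $C_\epsilon\to1$ as $\epsilon\to0$ is then inherited directly from Proposition \ref{prop:ext-est-1}, since the constant is literally unchanged.

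The main obstacle is the continuity step: one must know that extremal length, as extended by Kerckhoff to all of $\mathcal{ML}$, is continuous, and that this continuity applies simultaneously on $X$ and on the subsurface $X_\epsilon$ (through their respective doubles), so that the numerator and denominator converge along the same approximating sequence. A secondary point to verify is that an interior lamination $\mu$, whose leaves are essential simple closed geodesics or bi-infinite geodesics in the interior, is genuinely supported in $X_\epsilon$, away from the cusp neighbourhoods $D_i^\epsilon$, so that $\E_{X_\epsilon}(\mu)$ is well defined; equivalently, the lower bound $\E_{X_\epsilon}(\mu)\geq\E_X(\mu)$ may also be read off directly from the monotonicity of extremal length under the inclusion $X_\epsilon\subset X$, independently of the limiting argument.
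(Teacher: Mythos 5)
Your argument is correct and coincides with what the paper evidently intends: the corollary is stated there without proof, and the standard deduction from Proposition \ref{prop:ext-est-1} is exactly your combination of the degree-two homogeneity of extremal length, the density of $\mathbb R_+\times\Sim$ in $\mathcal{ML}_0(\mathcal S)$ (which the paper itself invokes in the proof of Proposition \ref{prop:teich}), and the continuity of Kerckhoff's extension of extremal length applied on the doubles of $X$ and $X_\epsilon$. No further ideas are needed.
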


The last step of the proof is to quasiconfromally embed $X\in T_{g,n}(\epsilon)$ into $\Phi(X)\in T_{g,n}(0)$ in some nice way. We need the following theorem  due to Buser-Makover-Muetzel-Silhol (\cite{BMMS}).
\begin{theorem}[\cite{BMMS}, Theorem 2.1]\label{thm:BMMS}
  Let $l_1,l_2>0$, $0<\epsilon<1/2$, and set $\epsilon*=\frac{2}{\pi}\epsilon$. Let $Y_{l_1,l_2,\epsilon}$ be a pair of pants with boundary length $l_1,l_2,\epsilon$, and set $Y_{l_1,l_2,0}^{\epsilon*}=Y_{l_1,l_2,0}\backslash \textup{Cusp}_{\epsilon*}(Y_{l_1,l_2,0})$. Then there exists a boundary coherent (see \S \ref{ssect:FN} for the definition) quasiconformal homeomorphism
  \[ \phi: Y_{l_1,l_2,\epsilon}\rightarrow Y_{l_1,l_2,0}^{\epsilon*}\]
  with dilation $q_\phi\leq 1+2\epsilon^2$.
\end{theorem}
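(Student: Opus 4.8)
The plan is to reduce the construction to a single right-angled hexagon, to split that hexagon into a \emph{thick} region on which the map is nearly isometric and a \emph{thin} region near the degenerating boundary on which the map is written explicitly, and to read off the quadratic dilation bound from the fact that the short length $\epsilon$ enters the relevant hyperbolic-trigonometric data only through $\cosh(\epsilon/2)$ and $\sinh(\epsilon/2)$. First I would use the reflection symmetry of a hyperbolic pair of pants. Cutting $Y_{l_1,l_2,\epsilon}$ along its three seams (the common perpendicular geodesic arcs between the boundary components) produces two congruent right-angled hexagons $H_\epsilon$ with consecutive sides $l_1/2,\sigma,l_2/2,\sigma',\epsilon/2,\sigma''$, where $\sigma,\sigma',\sigma''$ are the seam lengths; cutting $Y_{l_1,l_2,0}^{\epsilon*}$ in the same way gives two copies of a truncated degenerate hexagon $H_0^{\epsilon*}$, in which the side of length $\epsilon/2$ has collapsed to an ideal vertex (the cusp) and been replaced by a horocyclic arc of length $\epsilon*/2$. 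It then suffices to build a boundary-coherent quasiconformal map $\phi\colon H_\epsilon\to H_0^{\epsilon*}$ that carries seams to seams: extending $\phi$ to the second hexagon by the reflections fixing the seams glues continuously to a map of the whole pair of pants with the same maximal dilation, and boundary coherence on the hexagon yields boundary coherence on the pants.

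On the thick region, namely the part of $H_\epsilon$ at bounded distance from the long sides $l_1/2$, $l_2/2$ and from the seam $\sigma'$ opposite the short side, I would use that the side lengths and angles obey the standard right-angled hexagon relations, in which $\epsilon$ appears only via $\cosh(\epsilon/2)=1+\epsilon^2/8+O(\epsilon^4)$. Hence all metric data there differ from their $\epsilon=0$ values (those of the limiting ideal pentagon) by $O(\epsilon^2)$, and the map induced by these convergent coordinates has dilation $1+O(\epsilon^2)$. Since seams go to seams and the boundaries $l_1,l_2$ have the same length in source and target, this map is the identity on those two boundaries in the standard parametrization, so it is already boundary coherent there.

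The heart of the argument is the thin region around the short boundary. There I would use Fermi coordinates $(s,\rho)$ with $s\in[0,\epsilon/2]$ and $\rho\geq 0$, in which the hyperbolic metric $d\rho^2+\cosh^2\rho\,ds^2$ is conformal to the flat cylinder $ds^2+du^2$ through the Gudermannian $u=\mathrm{gd}(\rho)=\arctan(\sinh\rho)\in[0,\pi/2)$; on the cusp side the corresponding neighborhood is again conformally a flat cylinder, with the horocycle of length $\epsilon*$ sitting at a definite conformal height above the cusp. The normalization $\epsilon*=\tfrac{2}{\pi}\epsilon$ is dictated by conformal-width matching: a half-collar of the length-$\epsilon$ geodesic has total conformal width $\mathrm{gd}(\infty)=\pi/2$ and circumference $\epsilon$, hence modulus $\tfrac{\pi}{2\epsilon}$, and $\epsilon*=\tfrac{2}{\pi}\epsilon$ is exactly the horocycle length placing the truncation at conformal width $\pi/2$ above the cusp, so that both thin annuli have modulus $\tfrac{\pi}{2\epsilon}-O(1)$. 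I would then take $\phi$ on the thin region to be the explicit quasiconformal interpolation that is affine $s\mapsto\tfrac{\epsilon*}{\epsilon}\,s$ in the cylinder coordinate along the short side and matched to the thick-part map along the interface; this is automatically boundary coherent on the short boundary.

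The main obstacle is the sharp estimate $q_\phi\le 1+2\epsilon^2$, which lives entirely in the thin region. Bare modulus matching yields only dilation $1+O(\epsilon)$, because the two thin annuli agree in modulus only to additive $O(1)$ out of a modulus of size $\tfrac{\pi}{2\epsilon}$. To reach the quadratic bound one must instead control the complex dilatation of the collar-to-cusp map \emph{pointwise}: written in the common cylinder coordinate, both the collar metric and the cusp metric differ from the flat model by curvature terms governed by $\sinh(\epsilon/2)=\tfrac{\epsilon}{2}+O(\epsilon^3)$ and by the inner-edge correction $\tfrac{\pi}{2}-\mathrm{gd}(d)=\arctan(1/\sinh d)$, and one must show that these assemble into a dilatation of size $O(\epsilon^2)$ at every point and that the interpolation matches the thick-part map across the interface with no loss. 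Carrying out this second-order expansion is the delicate step, and the explicit tracking of the quadratic terms is what produces the constant $2$.
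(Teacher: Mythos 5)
First, a remark on the comparison itself: this paper does not prove Theorem \ref{thm:BMMS} at all---it is quoted verbatim from Buser--Makover--Muetzel--Silhol \cite{BMMS}, so the only proof your sketch can be measured against is the one in that reference. Structurally your outline is of the right kind and close in spirit to the actual construction: a symmetric decomposition, an explicit collar-to-cusp map in Fermi/horocyclic coordinates on the thin part, a near-isometric comparison on the thick part, and boundary coherence via the standard parametrization. Your modulus computation correctly explains the normalization $\epsilon^{*}=\frac{2}{\pi}\epsilon$ (the half-collar of the length-$\epsilon$ geodesic is a conformal cylinder of modulus $\frac{\pi}{2\epsilon}$, while the cusp side truncated at horocycle length $\epsilon^{*}$ contributes modulus $\frac{1}{\epsilon^{*}}-O(1)$), and you correctly diagnose that bare modulus matching can only give dilation $1+O(\epsilon)$.

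The genuine gap is that the statement to be proved is the explicit, uniform bound $q_\phi\leq 1+2\epsilon^{2}$ for all $0<\epsilon<1/2$, and this is precisely the step you label ``delicate'' and do not carry out: an argument ending in $1+O(\epsilon^{2})$ with untracked constants proves a strictly weaker statement (though, to be fair, a bound of the form $1+C\epsilon^{2}$, or even any $C_\epsilon'\to 1$, would suffice for the use made of the theorem in the proof of Theorem \ref{thm:FN}). There is also an error in the thick-part claim as stated: $\epsilon$ does \emph{not} enter the hexagon relations only through $\cosh(\epsilon/2)$. The two seams adjacent to the short side have lengths diverging like $\log(1/\epsilon)$, because $\sinh(\epsilon/2)\approx\epsilon/2$ appears in the denominator of the relevant formula, e.g.
\[
\cosh d(\gamma_1,\gamma_3)=\frac{\cosh\frac{l_2}{2}+\cosh\frac{l_1}{2}\cosh\frac{\epsilon}{2}}{\sinh\frac{l_1}{2}\,\sinh\frac{\epsilon}{2}}
\]
(compare equation (\ref{eq:gamm}) in this paper). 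So ``bounded distance from the long sides'' does not isolate an $\epsilon$-independent thick region: you must cut at a definite depth inside the collar---where the Fermi distance $\rho$ from the short geodesic satisfies $e^{-2\rho}=O(\epsilon^{2})$, which is the actual mechanism making the collar and cusp metrics agree to relative error $O(\epsilon^{2})$---then prove that the truncated thick pieces converge at quadratic rate and that the interpolation across the interface loses nothing. None of this quantitative work is in the proposal; as it stands it is a correct road map in which the theorem's actual content (the constant $2$ and the uniformity in $\epsilon$) is missing.
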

\remark Under the assumptions of Theorem \ref{thm:BMMS}, it is clear that there exists a boundary coherent quasiconformal homeomorphism $ \phi: Y_{l_1,\epsilon_1,\epsilon_2}\rightarrow Y_{l_1,0,,0}^{\epsilon_1*,\epsilon*_2}$ with dilation  $q_\phi\leq (1+2\epsilon_1^2)(1+2\epsilon_2^2)$.
\begin{proof}[Proof of Theorem \ref{thm:FN}]
  The second condition in Definition \ref{def} follows from the fact that $\Phi$ is a homeomorphism. It remains to verify the first condition.
  Let $X\in T_{g,n}(\epsilon)$ and $\Phi(X)\in T_{g,n}(0)$.
  It follows from Theorem \ref{thm:BMMS} that there exists a quasiconformal homeomorphism $g_1$  from $X_1$ to $X_{1,\epsilon*}:=\Phi(X_1)\backslash \textup{Cusp}_{\epsilon*}$ (resp. $g_2$  from $X_2$ to $X_{2,\epsilon*}:=\Phi(X_2)\backslash \textup{Cusp}_{\epsilon*}$ ) with dilation $K(g_i)\leq\Pi_{j=1}^n(1+2\epsilon_j^2)$, $i=1,2$. This can be obtained in the following way. Let $\{R_1,\cdots,R_{2g-2+n}\}$ be the $2g-2+n$ pairs of pants associated to the pants decomposition $\Gamma$.  If $\partial R_i\cap\partial X\neq \emptyset$, Let $h_1: R_1\to R_{1,\epsilon*}$ be the map obtained from Theorem \ref{thm:BMMS}, otherwise let $h_i: R_i\to R_i$ be the identity map. Gluing $\{h_i\}_{i=1}^{2g-2+n}$ via the Fenchel-Nielsen coordinates, we obtained the desired maps. Hence, for any interior simple closed curve $\alpha\in \Sim$,
  \[ \frac{1}{K(g_1)}\leq \frac{\E_{X_1}(\alpha)}{\E_{X_{1,\epsilon*}}(\alpha)}\leq K(g_1),\
  \frac{1}{K(g_2)}\leq \frac{\E_{X_2}(\alpha)}{\E_{X_{2,\epsilon*}}(\alpha)}\leq K(g_2).\]
  Combining with Proposition \ref{prop:ext-est-1}, we have
\[ \frac{1}{C'_\epsilon}\leq \frac{\E_{X_1}(\alpha)}{\E_{X_{1,\epsilon*}}(\alpha)}\leq C'_\epsilon C_\epsilon,\
  \frac{1}{C'_\epsilon}\leq \frac{\E_{X_2}(\alpha)}{\E_{X_{2,\epsilon*}}(\alpha)}\leq C'_\epsilon C_\epsilon,\]
where $C_\epsilon'=\Pi_{j=1}^n(1+2\epsilon_j^2)$ and $C_\epsilon$ is the constant in Proposition \ref{prop:ext-est-1}.

  Now the theorem follows from the Kerckhoff's formula on $T_{g,n}(0)$, Propositio \ref{prop:teich}  and Proposition \ref{prop:ext-est-1}.
\end{proof}

\section{Further study and questions}

\subsection{Nielsen extension}
Let $X$ be a hyperbolic surface with geodesic boundary. The \textit{infinite Nielsen extension} $X_\infty$ of $X$ is a punctured surface (see \cite{Bers}). For any $\Lambda\in\mathbb R_+^n$, we can define a map $\Psi:T_{g,n}(\Lambda)\to T_{g,n}(0)$ which associate the infinite Nielsen extension $X_\infty$ to any $X\in T_{g,n}(\Lambda)$. It is natural to ask the following question.
\begin{question}
 Given $\epsilon\in \mathbb R^n_+$,  is $\Psi: T_{g,n}(\epsilon)\to T_{g,n}(0)$ an almost isometry?
\end{question}

Unlike the Fenchel-Nielsen map $\Phi_\Gamma$, we do not know whether  $\Psi$ is a homeomorphism. But for small $\epsilon$, $\Psi$ is a surjective map. In fact, for any $X\in T_{g,n}(0)$, let $\{D^\xi_1,\cdots,D^\xi_n\}$ be the cuspidal neighbourhoods with boundary length $\xi$. For small $\epsilon$, there exists $\xi\in(0,1)^n$ such that the the boundary of $X\backslash (\cup_{1\leq k\leq n}D^\xi_k)$ has length $\epsilon$ in the  intrinsic metric. This means that $\Psi$ is a surjective map for small $\epsilon$, which means the second condition in Definition \ref{def} is satisfied.   %Nevertheless, we have the following result which is the first condition in Definition \ref{def}.
\begin{proposition}\label{thm:NT}
 %For any $\epsilon$, $\Psi: \T_{g,1}(\epsilon)\mapsto \T_{g,1}(0),$   is an almost isometry in Teichm\"uller metric. More precisely,
 For small $\epsilon$, $\Psi: T_{g,n}(\epsilon)\to T_{g,n}(0)$ is an almost isometry. More precisely, for $X, Y\in \T_{g,n}(\epsilon)$,
  $$ |d_T(X,Y)-d_T(X_0,Y_0) |\leq (n+3),$$
where $X_0=\Psi(X), Y_0=\Psi(Y)$.
\end{proposition}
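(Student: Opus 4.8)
The plan is to reduce Proposition~\ref{thm:NT} to the machinery already developed for the Fenchel--Nielsen map $\Phi_\Gamma$ in the proof of Theorem~\ref{thm:FN}. The key observation is that the Nielsen extension map $\Psi$ and the coordinate map $\Phi_\Gamma$ differ only in how they treat the collar region near each boundary: both send $X\in T_{g,n}(\epsilon)$ to a punctured surface whose interior geometry (encoded by the Fenchel--Nielsen lengths and twists $(\mathcal{L},\mathcal{T})$ of the pants curves $\Gamma$) is essentially unchanged. First I would set $X_0=\Psi(X)$, $Y_0=\Psi(Y)$ and, as recorded just before the statement, invoke the surjectivity of $\Psi$ for small $\epsilon$ to dispose of the second condition in Definition~\ref{def}. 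So the entire content is the one-sided distance estimate, and I would aim to prove it by exhibiting, as in the proof of Theorem~\ref{thm:FN}, a quasiconformal homeomorphism from $X$ to $X_0$ with dilation close to $1$ and then comparing extremal lengths of interior simple closed curves via Kerckhoff's formula.

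The concrete steps are as follows. First I would construct a quasiconformal map $g\colon X\to X_0=X_\infty$ by the same pants-by-pants gluing used in Theorem~\ref{thm:FN}: on each pants $R_i$ meeting $\partial X$, replace the boundary geodesic of length $\epsilon_j$ by a cusp using Theorem~\ref{thm:BMMS}, and on interior pants use the identity. The difference from $\Phi_\Gamma$ is that the Nielsen extension attaches a full half-collar (a punctured-disc neighborhood) rather than cutting off a cusp neighborhood of fixed horocyclic length; but the map produced by Theorem~\ref{thm:BMMS} already lands in $Y^{\epsilon*}_{l_1,l_2,0}$, which is exactly $X_0$ with its cusp neighborhoods of boundary length $\epsilon*=\tfrac{2}{\pi}\epsilon$ removed, so the same map serves after noting that removing a cusp neighborhood is a conformal operation inside $X_0$. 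This yields $K(g_1),K(g_2)\leq \prod_{j=1}^n(1+2\epsilon_j^2)$. Second, combining this dilation bound with Proposition~\ref{prop:ext-est-1} (and its Corollary) exactly as in the proof of Theorem~\ref{thm:FN} gives, for every interior $\alpha\in\Sim$,
\[
\frac{1}{C'_\epsilon}\leq \frac{\E_X(\alpha)}{\E_{X_0}(\alpha)}\leq C'_\epsilon C_\epsilon,
\]
with $C'_\epsilon=\prod_{j=1}^n(1+2\epsilon_j^2)$ and $C_\epsilon\to 1$. Third, I would feed these comparisons into Proposition~\ref{prop:teich}, which expresses $d_T$ up to an additive error $\log(n+2)$ in terms of the supremum over $\Sim$ of $\tfrac12|\log(\E_Y(\alpha)/\E_X(\alpha))|$; the multiplicative extremal-length distortions become additive errors after taking logarithms, and accumulating them across $X$ and $Y$ together with the $\log(n+2)$ terms on both sides of the inequality produces the stated bound $|d_T(X,Y)-d_T(X_0,Y_0)|\leq (n+3)$ once $\epsilon$ is small enough that $C'_\epsilon C_\epsilon$ is close to $1$.

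The main obstacle I anticipate is not the estimation—that is essentially a replay of Theorem~\ref{thm:FN}—but rather justifying that the relevant \emph{suprema} in Proposition~\ref{prop:teich} are genuinely comparable between $X$ and $X_0$. Proposition~\ref{prop:ext-est-1} and Theorem~\ref{thm:BMMS} control the extremal length of each \emph{individual} interior curve $\alpha$, but the Teichm\"uller metric is a supremum over all of $\Sim$ (indeed over $\ML(X^d)$), and the maximizing lamination for the pair $(X_0,Y_0)$ could a priori have large intersection with the reattached cusp regions. The care required is exactly the point handled in the proof of Proposition~\ref{prop:ext-est-1}, Case~2: any simple closed curve can be homotoped out of the cusp neighborhoods at bounded multiplicative cost in extremal length, uniformly in the curve. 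Because that argument is uniform over $\Sim$ and over $X\in T_{g,n}(0)$, the per-curve bounds upgrade to a bound on the suprema, and the supremum over boundary classes $\Bnd$ contributes only the bounded term already absorbed into Proposition~\ref{prop:teich}. Once this uniformity is in hand the additive constant is a matter of bookkeeping, and I would simply verify that the constants combine to at most $n+3$.
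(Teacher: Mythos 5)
Your overall architecture (reduce to Proposition \ref{prop:teich} plus a uniform per-curve comparison of extremal lengths between $X$ and $X_0$, handling surjectivity separately) matches the paper, which deduces the statement from Proposition \ref{prop:teich} together with Proposition \ref{prop:Nielsen}. The gap is in how you produce the per-curve comparison. Your construction builds, pants by pants via Theorem \ref{thm:BMMS}, a quasiconformal map whose target is the surface with the \emph{same} Fenchel--Nielsen length and twist data and cusps in place of boundary, i.e.\ $\Phi_\Gamma(X)\setminus\textup{Cusp}_{\epsilon*}$; that is the correct target for Theorem \ref{thm:FN}, but it is not $\Psi(X)\setminus\textup{Cusp}_{\epsilon*}$. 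The infinite Nielsen extension $X_0=X_\infty$ is a different point of $T_{g,n}(0)$: by Proposition \ref{Halpern} every interior simple closed geodesic strictly shortens in passing from $X$ to $X_\infty$ (by a factor controlled by $k_\infty$), so the cuff lengths of the pants decomposition of $X_0$ induced by $\Gamma$ are not the $l_i$ of $X$, and the pants $Y_{l_1,l_2,0}^{\epsilon*}$ produced by Theorem \ref{thm:BMMS} does not sit inside $X_0$ in the marked sense. Your assertion that this pants ``is exactly $X_0$ with its cusp neighborhoods removed'' is therefore false, and repairing it would require bounding the distance between $\Phi_\Gamma(X)$ and $\Psi(X)$, which is not easier than the statement being proved.

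The paper's route through Proposition \ref{prop:Nielsen} avoids any quasiconformal map to $X_0$. The inequality $\E_{X_0}(\alpha)\le\E_X(\alpha)$ is free because the Nielsen extension gives a conformal inclusion $X\subset X_0$; for the reverse inequality one splits on the size of $\E_X(\alpha)$, using Maskit's comparison of hyperbolic and extremal lengths (Lemma \ref{Maskit}) together with Halpern's estimate $k_\infty l_X(\alpha)<l_{X_0}(\alpha)<l_X(\alpha)$ for curves of small extremal length, and the flat-metric surgery of Case 2 of Proposition \ref{prop:ext-est-1} for curves of large extremal length. Your point about upgrading per-curve bounds to bounds on the suprema, and your final assembly through Proposition \ref{prop:teich} and Kerckhoff's formula, are fine and agree with the paper; the missing ingredient is precisely the Halpern-type control that must replace your Theorem \ref{thm:BMMS} step.
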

\begin{proof}
  It follows from Proposition \ref{prop:teich} and Proposition \ref{prop:Nielsen}.
\end{proof}
\begin{proposition}\label{prop:Nielsen}
 Given $\Lambda=(\lambda_1,\cdots,\lambda_n)\in \mathbb R_+^n$. For $X\in \T_{g,n}(\Lambda)$, let $X_0\in \T_{g,n}(0)$ be the infinite Nielsen extension of $X$. There is a constant $C_\Lambda$ such that for any $\alpha\in \Sim$ and any $X\in\T_{g,n}(\Lambda)$,
  $$ {1}\leq \frac{\E_{X}(\alpha)}{\E_{X_0}(\alpha)}\leq {C_\Lambda}.$$
  Moreover, $C_\Lambda\to 0$ as $\Lambda\to0$.
\end{proposition}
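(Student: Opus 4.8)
The plan is to run an argument parallel to that of Proposition \ref{prop:ext-est-1}, with the geodesic boundary $\partial X=\cup_{i=1}^n\beta_i$ now playing the role that the horocyclic curves $\partial D_i^\epsilon$ played there. The first point is geometric: the infinite Nielsen extension realizes $X$ as a conformal subsurface $X\subset X_0$, and the complement $X_0\setminus X$ is a disjoint union of $n$ collars $A_1,\dots,A_n$, where $A_i$ is an annular neighborhood of the $i$-th puncture of $X_0$ bounded by $\beta_i$. I would first record the conformal type of $A_i$: in the cusp coordinate $D^*=\{0<|w|<1\}$ with $\rho=|dw|/(|w|\log|w|^{-1})$ around the $i$-th puncture, the curve $\beta_i$ sits at conformal radius $|w|=r(\lambda_i)$ for an explicit $r(\lambda_i)$ determined by the collar lemma, with $r(\lambda_i)\to 0$ as $\lambda_i\to 0$. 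Since $X\subset X_0$, monotonicity of extremal length under conformal inclusion immediately gives $\E_X(\alpha)\ge \E_{X_0}(\alpha)$, i.e. the lower bound $1\le \E_X(\alpha)/\E_{X_0}(\alpha)$.

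For the upper bound I would split into two cases exactly as in Proposition \ref{prop:ext-est-1}, according to whether $\E_X(\alpha)$ is below or above a threshold $\tau(\Lambda)$ (the analog of $\sqrt\epsilon$). In the small case, let $\phi_0$ be the extremal quadratic differential for $\alpha$ on the larger surface $X_0$, with flat metric $|\phi_0|$, and let $A_\alpha\subset X_0$ be its characteristic cylinder, of height $H_\alpha=\sqrt{\|\phi_0\|/\E_{X_0}(\alpha)}$. Using that $\phi_0$ has at worst a simple pole at each puncture, the same subharmonicity and averaging estimate that produced (\ref{eq:length1}) bounds the $|\phi_0|$-length of $\beta_i$ by $L_{|\phi_0|}(\beta_i)\le (c(\lambda_i)\|\phi_0\|)^{1/2}$, where the constant $c(\lambda_i)$ comes from integrating over the annulus $\{r(\lambda_i)<|w|<R(1)\}$ and satisfies $c(\lambda_i)\to 0$ as $\lambda_i\to0$. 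The maximal subcylinder of $A_\alpha$ lying inside $X$ then has height at least $H_\alpha-2\max_i L_{|\phi_0|}(\beta_i)$, and comparing the two cylinders yields $\E_X(\alpha)/\E_{X_0}(\alpha)\le H_\alpha/(H_\alpha-2\max_i L_{|\phi_0|}(\beta_i))\le 1+o_\Lambda(1)$.

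In the large case $\E_X(\alpha)\ge\tau(\Lambda)$, I would instead take the extremal quadratic differential $\phi$ for $\alpha$ on the smaller surface $X$; since $\partial X$ lies on the critical graph of its horizontal foliation, each $\beta_i$ is a $|\phi|$-geodesic. Extending $|\phi|$ by zero across the collars defines a conformal metric $\rho$ on $X_0$ with $\mathrm{Area}(\rho)=\|\phi\|$. Given any representative of $\alpha$ in $X_0$, I push the subarcs entering the collars $A_i$ back onto $\beta_i$ to obtain a homotopic curve in $X$, so that $L_\rho(\alpha)\ge \sqrt{\E_X(\alpha)\|\phi\|}-\sum_{i=1}^n L_{|\phi|}(\beta_i)$. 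Here $L_{|\phi|}(\beta_i)\le\sqrt{\E_X(\beta_i)\|\phi\|}$, and Lemma \ref{Maskit}, applied to the boundary curve of length $\lambda_i$, bounds $\E_X(\beta_i)$ by a multiple of $\lambda_i e^{\lambda_i/2}$, which is small for small $\lambda_i$; dividing by $\tau(\Lambda)$ keeps the total relative error small. This produces $\E_{X_0}(\alpha)\ge (1-o_\Lambda(1))^2\,\E_X(\alpha)$, hence $\E_X(\alpha)/\E_{X_0}(\alpha)\le(1-o_\Lambda(1))^{-2}$.

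Taking $C_\Lambda$ to be the maximum of the two case constants gives the proposition, and since both error terms tend to $0$ as $\Lambda\to0$ we obtain $C_\Lambda\to1$ (the displayed ``$C_\Lambda\to0$'' should read $C_\Lambda\to1$, since the ratio is always $\ge1$). The main obstacle is the first, geometric step: pinning down the conformal position $r(\lambda_i)$ of the geodesic boundary $\beta_i$ inside the cusp coordinate of $X_0$, that is, understanding the conformal structure of the gap $X_0\setminus X$ produced by the infinite Nielsen extension, so that the length constant $c(\lambda_i)$ is explicit and tends to $0$ with $\lambda_i$. Everything downstream is then a routine repetition of the flat-geometry estimates in Proposition \ref{prop:ext-est-1}.
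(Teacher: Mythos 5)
Your lower bound and your Case 2 match the paper's proof: the paper also handles $\E_X(\alpha)\geq 4n^2\lambda e^{\lambda/2}$ by the method of Case 2 of Proposition \ref{prop:ext-est-1} (extend the extremal metric of $\alpha$ on $X$ by zero over $X_0\setminus X$, push arcs back to $\beta_i$, and control $\E_X(\beta_i)$ by $\lambda_i$ via Lemma \ref{Maskit}), and your observation that the displayed ``$C_\Lambda\to0$'' should read $C_\Lambda\to1$ is correct. The divergence is in the small--extremal-length case, and there your argument has a genuine gap --- the one you flag yourself, but it is not a routine technicality. The estimate (\ref{eq:length1}) is an averaging/subharmonicity bound for the \emph{round circle} $|w|=R(\epsilon)$ in the canonical cusp coordinate. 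The curve $\beta_i\subset X_0$ is not a horocycle of $X_0$ (its free homotopy class even satisfies $l_{X_0}(\beta_i)=0$ by Proposition \ref{Halpern}), it is not a circle $|w|=\mathrm{const}$, and nothing in the construction of the infinite Nielsen extension tells you a priori that $\beta_i$ lies in a sub-disc $\{|w|<r\}$ with $R(1)-r$ bounded below uniformly over $X\in\T_{g,n}(\Lambda)$. Without that conformal localization you can neither apply the averaging estimate to $\beta_i$ directly nor replace $\beta_i$ by an enclosing round circle, so the key bound $L_{|\phi_0|}(\beta_i)\leq (c(\lambda_i)||\phi_0||)^{1/2}$ is unproven. (A secondary point: the analogous constant in Proposition \ref{prop:ext-est-1} is $4\pi/R_2$, which does not tend to $0$ with the truncation parameter; your assertion that $c(\lambda_i)\to0$ is neither justified by that computation nor needed, since the threshold $\tau(\Lambda)\to0$ already forces the Case 1 ratio to $1$.)

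The paper sidesteps this issue entirely. For $\E_X(\alpha)$ below a threshold $\epsilon_0$ it converts extremal length to hyperbolic length on both $X$ and $X_0$ via Lemma \ref{Maskit}, compares the two hyperbolic lengths using Halpern's theorem (Proposition \ref{Halpern}: $k_\infty l_X(\alpha)<l_{X_0}(\alpha)<l_X(\alpha)$), and converts back; the intermediate range $\epsilon_0\leq\E_X(\alpha)\leq 4n^2\lambda e^{\lambda/2}$ is handled by trivial two-sided bounds. If you want to rescue your flat-geometry route, the missing ingredient is exactly a uniform lower bound on the modulus of the annulus separating $\beta_i$ from the rest of $X_0$ (equivalently, that $\beta_i$ sits at definite conformal depth in the cusp), and establishing that would require Halpern-type control of the Nielsen extension anyway --- which is why the paper reaches for Proposition \ref{Halpern} instead.
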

\begin{proof}
 Since $X\subset X_0$, $\E_X(\alpha)\geq\E_{X_0}(\alpha)$. For the right inequality, we distinguish two cases.

   \textbf{Case 1}:  $\E_{X}(\alpha)\leq 4n^2 \lambda e^{\lambda/2}$, where $\lambda=\max_{1\leq i\leq n} \lambda_i$. By Lemma \ref{Maskit} and Proposition \ref{Halpern}, there are constants $\epsilon_0$,  $c_1,c_2$ such that if $\E_X(\alpha)<\epsilon_0$, then $c_1\leq \frac{\E_X(\alpha)}{\E_{X_0}(\alpha)}\leq c_2$ for any $\alpha\in \Sim$. If $\epsilon_0\leq\E_X(\alpha)\leq4n^2 \lambda e^{\lambda/2}$,
  then $\frac{\epsilon_0}{4n^2 \lambda e^{\lambda/2}} \leq \frac{\E_X(\alpha)}{\E_{X_0}(\alpha)}\leq\frac{4n^2 \lambda e^{\lambda/2}}{\epsilon_0}.$

   \textbf{Case 2}: $\E_{X}(\alpha)\geq 4n^2 \lambda e^{\lambda/2}$, where $\lambda=\max_{1\leq i\leq n} \lambda_i$.  Applying the method used in Case 2 in the proof  of Proposition \ref{prop:ext-est-1}, we get
   $$ \E_X(\alpha)\leq 4\E_{X_0}(\alpha).$$

    The second part follows directly from Proposition \ref{prop:ext-est-1}.
\end{proof}
%\begin{proposition}\label{prop:Nielsen}
 % Given $\Lambda\in \mathbb R_+^n$, there is  constant $C_\Lambda$, such that for any $\alpha\in\Co$ and any $X\in\T_{g,}(\Lambda)$,
%  $$\frac{1}{C_\Lambda}\leq \frac{l_X(\alpha)}{l_{X'}(\alpha)}\leq C_\Lambda$$
%  where  $X'$ is the Nielsen extension on $X$.
%\end{proposition}

\begin{proposition}[Halpern,\cite{Halpern}]\label{Halpern}
 Given $\Lambda=(\lambda_i,\cdots,\lambda_n)\in\mathbb R_+^n$ and $\lambda=\max_{1\leq i\leq n}\lambda_i$. For $X\in\T_{g,n}(\Lambda)$, let $X_\infty$ be the infinite Nielsen extension of $X$. Let $\alpha$ be a simple closed curve. If $\alpha$ is homotopic to one of the boundary components, $l_{X_\infty}(\alpha)=0$.
 Otherwise $k_\infty l_X(\alpha)< l_{X_\infty}(\alpha)<l_X(\alpha)$, where $k_\infty=\Pi_{i=1}^\infty[1-(2/\pi)\tan^{-1}(2\sinh \lambda/2^i)].$
\end{proposition}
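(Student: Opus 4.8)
The plan is to reduce the global comparison between $X$ and its infinite Nielsen extension $X_\infty$ to a product of one-step comparisons along the canonical tower of finite Nielsen extensions, so that the transcendental constant $k_\infty$ emerges as the value of a convergent infinite product of elementary collar factors. The idea is that opening the geodesic boundaries toward cusps happens in stages, each stage shrinking lengths by a controlled amount, and the total distortion is the product of the stagewise distortions.

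First I would set up the tower. The infinite Nielsen extension is produced by successively opening each geodesic boundary toward a cusp: one obtains a sequence of hyperbolic surfaces $X=X^{(0)},X^{(1)},X^{(2)},\dots$ in which the boundary component issuing from $\beta_j$ has length $\lambda_j/2^{\,i}$ in $X^{(i)}$, and $X_\infty$ is the limiting complete structure in which these boundaries have become cusps. Along this tower the two facts to establish, for each essential non-boundary-parallel $\alpha$, are a two-sided one-step estimate
\[ \Big(1-\tfrac{2}{\pi}\tan^{-1}\big(2\sinh(\lambda/2^{\,i})\big)\Big)\, l_{X^{(i-1)}}(\alpha)\;<\;l_{X^{(i)}}(\alpha)\;<\;l_{X^{(i-1)}}(\alpha), \]
where $\lambda=\max_j\lambda_j$, together with the convergence $l_{X^{(i)}}(\alpha)\to l_{X_\infty}(\alpha)$. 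The boundary case is separated off at once: if $\alpha$ is homotopic to $\beta_j$, then in $X^{(i)}$ it is freely homotopic to a geodesic of length $\lambda_j/2^{\,i}\to 0$, so $l_{X_\infty}(\alpha)=0$.

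The geometric heart is the one-step estimate, and I expect it to be the main obstacle. To prove it I would work in the single collar modified at stage $i$ and compare the geodesic representatives of $\alpha$ before and after the step. The upper bound reflects that opening the boundary toward the cusp only gives the class more room, so its geodesic shortens; the lower bound must quantify how much it can shorten, by projecting the new geodesic orthogonally onto the core of the modified collar and comparing each excursion with its shadow. A hyperbolic-trigonometric computation in a collar about a geodesic of length $\lambda/2^{\,i}$, in which the half-width and the crossing angles are governed by $\sinh(\lambda/2^{\,i})$, should yield precisely the factor $1-\tfrac{2}{\pi}\tan^{-1}\big(2\sinh(\lambda/2^{\,i})\big)$; using $\lambda=\max_j\lambda_j$ makes the estimate uniform over all boundary components, since $t\mapsto 1-\tfrac{2}{\pi}\tan^{-1}(2\sinh t)$ is decreasing. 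Strictness of both inequalities is built into this comparison, the class being non-boundary-parallel.

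Finally I would assemble the pieces. Multiplying the one-step lower bounds over $i=1,\dots,N$ and letting $N\to\infty$ gives
\[ l_{X_\infty}(\alpha)=\lim_{i\to\infty} l_{X^{(i)}}(\alpha)\;\ge\;\prod_{i=1}^{\infty}\Big(1-\tfrac{2}{\pi}\tan^{-1}\big(2\sinh(\lambda/2^{\,i})\big)\Big)\, l_X(\alpha)=k_\infty\, l_X(\alpha), \]
the product being a well-defined positive number because $\tan^{-1}(2\sinh(\lambda/2^{\,i}))\sim \lambda/2^{\,i-1}$ is summable; the strictness of the individual estimates upgrades this to $l_{X_\infty}(\alpha)>k_\infty l_X(\alpha)$, while the upper one-step bounds telescope to $l_{X_\infty}(\alpha)<l_X(\alpha)$. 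This completes the proposed proof, and it also makes transparent that each factor, and hence $k_\infty$, tends to $1$ as $\Lambda\to 0$, which is exactly the limiting behaviour invoked in Proposition \ref{prop:Nielsen}.
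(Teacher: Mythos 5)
The paper does not prove this statement: it is quoted from Halpern \cite{Halpern} as an external input, so there is no internal proof to compare yours against. Judged on its own terms, your proposal has the right architecture --- Halpern's constant $k_\infty$ is indeed an infinite product of one-step factors, one per Nielsen extension $X^{(i-1)}\subset X^{(i)}$, and telescoping these together with the convergence $l_{X^{(i)}}(\alpha)\to l_{X_\infty}(\alpha)$ is how such a bound must be assembled --- but the proposal stops exactly where the theorem begins. The entire quantitative content is the one-step inequality
\[
\Bigl(1-\tfrac{2}{\pi}\tan^{-1}\bigl(2\sinh(\lambda/2^{i})\bigr)\Bigr)\,l_{X^{(i-1)}}(\alpha)<l_{X^{(i)}}(\alpha),
\]
and for this you offer only the expectation that a hyperbolic-trigonometric computation ``should yield precisely the factor.'' Nothing in the proposal explains why the shadow/projection comparison in the modified collar produces $\tan^{-1}(2\sinh(\cdot))$ rather than some other function of the boundary length; without that computation the statement is not proved, only restated.

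Second, your setup contains an unjustified (and, as stated, incorrect) assertion: that the boundary component issuing from $\beta_j$ has length exactly $\lambda_j/2^{i}$ in $X^{(i)}$. The Nielsen extension is defined conformally (one re-uniformizes $X$ with funnels attached as a bordered surface, so that its Nielsen kernel is conformally $X$); the new boundary lengths are therefore outputs of the construction, not prescribed inputs. What Halpern actually establishes is an estimate that these lengths at least halve at each stage, and this decay --- which also underlies both the claim $l_{X_\infty}(\beta_j)=0$ and the convergence of the infinite product --- is itself a lemma requiring proof. Your remark that $t\mapsto 1-\frac{2}{\pi}\tan^{-1}(2\sinh t)$ is decreasing would let you substitute the upper bound $\lambda/2^{i}$ for the true boundary length once that decay lemma is available, so the gap is structural rather than fatal; but as written, both the decay of the boundary lengths and the collar computation producing the factor $1-\frac{2}{\pi}\tan^{-1}(2\sinh t)$ are missing, and those two items constitute essentially all of Halpern's argument.
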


\subsection{Improving Theorem \ref{thm:FN}}
In Theorem \ref{thm:FN}, we assume that the boundary component has small boundary length. We ask the following questions.
\begin{question}\label{ques:length}
  Does  Theorem \ref{thm:FN} still hold if we drop the condition that $\epsilon$ is small ?
\end{question}
Let $R,R'$ be two pairs of pants such that $\partial R=\{\gamma_1,\gamma_2,\gamma_3\}$ and $\partial R'=\{\gamma'_1,\gamma'_2,\gamma'_3\}$. Assume that $l(\gamma_1)=l(\gamma_1')$, $l(\gamma_2)=l(\gamma_2')$, $l(\gamma_3)=l_3$ and $l(\gamma_3')=l_3'$. One possible way to answer Question \ref{ques:length} is to find a  boundary coherent quasiconformal map $f:R\to R'$ with quasiconformal dilation only depends on $l_3,l_3'$.

\begin{question}
  Can we replace the constant $\log (n+3)$ in Theorem \ref{thm:FN} by a constant $C(\epsilon)$ such that $C(\epsilon)\to 0$  if $\epsilon\to0$?
\end{question}

\subsection{Infinite type surfaces}
A surface is of  \textit{infinite type} if it has infinite genus or infinite boundary boundary component or infinite punctures. In \cite{ALPSS}, the authors studied the Fenchel-Nielsen coordinates of the Teichm\"uller space of infinite type surfaces. In \cite{LP}, the authors studied the length spectrum metric and the Teichm\"uller metric on the Teichm\"uller space of infinite type surfaces.
\begin{question}
  Study similar questions for the Teichm\"uller space of infinite type surfaces. More formally, whether Theorem \ref{thm:arc-almost} and Theorem \ref{thm:FN} are still true if the surface in consideration has  infinite genus?
\end{question}

\end{document}